\theoremstyle{definition}
\newtheorem{theorem}{Theorem}[section]
\newtheorem{lemma}[theorem]{Lemma}
\newtheorem{definition}[theorem]{Definition}
\newtheorem{cor}[theorem]{Corollary}
\newtheorem{prop}[theorem]{Proposition}
\newtheorem{example}[theorem]{Example}
\newtheorem{remark}[theorem]{Remark}
\newtheorem*{altdefofbuilding*}{Alternate definition of building}
\newtheorem*{theorem*}{Theorem}
\newtheorem*{lemma*}{Lemma}
\newtheorem*{cor*}{Corollary}
\newcommand{\graph}{\mathcal{G}}
\newcommand{\poset}{\mathcal{S}}
\newcommand{\bldg}{\Delta}
\newcommand{\Ztwo}{\mathbb{Z}_2}
\newcommand{\length}{\ell}
\newcommand{\End}{\textrm{End}}
\newcommand{\real}{\mathcal{U}}
\newcommand{\panel}{\mathcal{R}}
\newcommand{\Stab}{\textrm{Stab}}
\newcommand{\res}{\mathcal{R}}
\newcommand{\spaut}{\textrm{Aut}_{\textrm{sp}}}
\newcommand{\comments}[1]{}
\renewcommand{\t}[1]{\widetilde{#1}}
\newcommand{\tbldg}{\t{\bldg}}
\newcommand{\tW}{\t{W}}
\newcommand{\tE}{\t{E}}
\newcommand{\tG}{\t{G}}
\newcommand{\aut}{\textrm{Aut}}
\newcommand{\ball}{D}
\newcounter{vertexindex}
\newcommand{\chapter}{}
\title{Automorphism Groups of Graph Products of Buildings}
\author{Aliska Gibbins\\ agibbins@fit.edu\\Florida Institute of Technology}
\begin{document}
\maketitle

\newcommand{\hexagons}{\label{hexagons}
$\begin{array}{ll}
\begin{tikzpicture}[scale=2]
\fill (0,0)[black!20!white] -- (.5,1.732/2-.01) --(-.5, 1.732/2-.01) --cycle;
\draw (0,0) +(0:1) -- +(60:1) -- +(120:1) -- +(180:1) -- +(240:1)-- + (300:1)  -- cycle;
\draw (0,1.732/2) -- (0,-1.732/2);
\draw (1,0) -- (-1,0);
\draw[rotate=60] (1,0) -- (-1,0);
\draw[rotate=120] (1,0) -- (-1,0);
\draw[rotate=60] (0,1.732/2) -- (0,-1.732/2);
\draw[rotate=120] (0,1.732/2) -- (0,-1.732/2);
\node at (0,1.723/2+.2) {$\emptyset$};
\node at (-.5,1.723/2+.2) {$\{s\}$};
\node at (.5,1.723/2+.2) {$\{t\}$};
\node at (.2,-.1) [fill=white] {$\{s,t\}$};
\draw[very thick, dashed, red] (.5,1.732/2) -- (0,0) ;
\draw[very thick, blue] (0,0)-- (-.5,1.732/2);
\end{tikzpicture}\hspace{.25in}
&
\begin{tikzpicture}[scale=2]
\draw (0,0) +(0:1) -- +(60:1) -- +(120:1) -- +(180:1) -- +(240:1)-- + (300:1)  -- cycle;
\node at (0,1.723/2+.2) {$\emptyset$};
\node at (-.5,1.723/2+.2) {$\{s\}$};
\filldraw[blue,xshift=-2pt,yshift=-2pt](-.5,1.723/2) rectangle ++(4pt,4pt);
\filldraw[red] (.5,1.723/2) circle (.075) ;
\node at (.5,1.723/2+.2) {$\{t\}$};
\end{tikzpicture}
\end{array}$

}

\newcommand{\fanoplane}{
$\begin{array}{ll}
\begin{tikzpicture}
\draw (0,0) +(0:2) -- +(360/14:2) -- +(360/7:2) -- +(3*360/14:2) -- +(4*360/14:2)-- + (5*360/14:2)-- + (6*360/14:2)-- + (7*360/14:2)-- + (8*360/14:2)-- + (9*360/14:2)-- + (10*360/14:2)-- + (11*360/14:2)-- + (12*360/14:2)-- + (13*360/14:2)-- cycle;
\forloop{vertexindex}{1}{\value{vertexindex} < 8}{
\draw [rotate=2*\value{vertexindex}*360/14] (0,0)+(0:2)-- + (5*360/14:2);
}
\forloop{vertexindex}{1}{\value{vertexindex} < 8}{
\filldraw[rotate=2*\value{vertexindex}*360/14,xshift=-2pt,yshift=-2pt,blue] (2,0)  rectangle ++(4pt,4pt) ;
\filldraw[rotate=(2*\value{vertexindex}+1)*360/14,red] (2,0) circle (.075) ;
}
\end{tikzpicture}
&
\begin{tikzpicture}[scale=2]
\draw (-.5,0)--(.5,0);
\draw (.5,0)--(.75,.5);
\draw (.75,.5)--(.95,.3);
\draw (.75,.5)--(.95,.7);
\draw (.75,-.5)--(.95,-.3);
\draw (.75,-.5)--(.95,-.7);
\draw (-.75,-.5)--(-.95,-.3);
\draw (-.75,-.5)--(-.95,-.7);
\draw (-.75,.5)--(-.95,.3);
\draw (-.75,.5)--(-.95,.7);
\draw (.5,0)--(.75,-.5);
\draw (-.5,0)--(-.75,.5);
\draw (-.5,0)--(-.75,-.5);
\filldraw[red] (-.5,0) circle (.07) ;
\filldraw[blue,xshift=-2pt,yshift=-2pt] (.5,0)  rectangle ++(4pt,4pt) ;
\filldraw[red] (.75,.5,0) circle (.05) ;
\filldraw[red] (.75,-.5) circle (.05) ;
\filldraw[blue,xshift=-1.5pt,yshift=-1.5pt] (-.75,.5) rectangle ++(3pt,3pt)  ;
\filldraw[blue,xshift=-1.5pt,yshift=-1.5pt] (-.75,-.5) rectangle ++(3pt,3pt)  ;
\filldraw[blue,xshift=-1pt,yshift=-1.5pt] (.95,.7) rectangle ++(2pt,2pt)  ;
\filldraw[blue,xshift=-1pt,yshift=-1.5pt] (.95,-.7)  rectangle ++(2pt,2pt)  ;
\filldraw[red] (-.95,-.7) circle (.03) ;
\filldraw[red] (-.95,.7) circle (.03) ;
\filldraw[red] (-.95,.3) circle (.03) ;
\filldraw[red] (-.95,-.3) circle (.03) ;
\filldraw[blue,xshift=-1pt,yshift=-1.5pt](.95,-.3)  rectangle ++(2pt,2pt)  ;
\filldraw[blue,xshift=-1pt,yshift=-1.5pt](.95,.3)  rectangle ++(2pt,2pt)  ;

\end{tikzpicture}
\end{array}$
}

\newcommand{\fanoplaneapartment}{
\begin{tikzpicture}
\draw (0,0) +(0:2) -- +(360/14:2) -- +(360/7:2) -- +(3*360/14:2) -- +(4*360/14:2)-- + (5*360/14:2)-- + (6*360/14:2)-- + (7*360/14:2)-- + (8*360/14:2)-- + (9*360/14:2)-- + (10*360/14:2)-- + (11*360/14:2)-- + (12*360/14:2)-- + (13*360/14:2)-- cycle;
\draw [line width=.15cm, gray] (0,0) + (12*360/14:2)-- + (13*360/14:2) -- +(0:2) -- +(360/14:2);
\draw [line width=.05cm] (0,0) + (12*360/14:2)-- + (13*360/14:2) -- +(0:2) -- +(360/14:2) -- +(360/7:2) -- +(3*360/14:2) --cycle;
\forloop{vertexindex}{1}{\value{vertexindex} < 8}{
\filldraw[rotate=2*\value{vertexindex}*360/14,blue] (2,0) circle (.075) ;
\filldraw[rotate=(2*\value{vertexindex}+1)*360/14,red] (2,0) circle (.075) ;
\draw [rotate=2*\value{vertexindex}*360/14] (0,0)+(0:2)-- + (5*360/14:2);
}
\end{tikzpicture}
}
\newcommand{\treerootapartment}{
\begin{tikzpicture}
\draw[line width=.05cm] (0,0)--(240pt,0);
\draw[xshift=12,line width=.09cm] (0,0)--(240pt,0);

\node at (-.5,0){\dots};
\node at (265pt,0){\dots};

\draw[xshift=12,line width=.18cm,gray] (0,0)--(60pt,0);
\draw[xshift=12,line width=.18cm,gray] (120pt,0)--(180pt,0);
\draw[xshift=12,line width=.09cm] (0,0)--(240pt,0);
\draw[xshift=12, rotate=60] (0,0)--(1.5,0);
\node at (0,0)[xshift=12, rotate=60,yshift=.5cm,xshift=1.25cm,rotate=60,xshift=10]{\dots};
\node at (0,0)[xshift=249, rotate=60,yshift=.5cm,xshift=1.25cm,xshift=10]{\dots};
\draw[xshift=12, rotate=60] (1,0)--(1.25,.5);
\draw[xshift=12,red,fill=red] (0,0) circle (.13);
\draw[xshift=12](0,-15pt)--(0,15pt);
\node[xshift=12] at (-7pt,-15pt){$m$};
\node[xshift=12] at (13pt,6pt){$C$};
\draw[xshift=12,rotate=60,blue,fill=blue] (1,0) circle (.08);
\draw[xshift=32, rotate=60] (0,0)--(1.5,0);
\draw[xshift=32, rotate=60] (1,0)--(1.25,.5);
\draw[xshift=32,blue,fill=blue] (0,0) circle (.13);
\draw[xshift=32,rotate=60,red,fill=red] (1,0) circle (.08);
\draw[xshift=12+40, rotate=60] (0,0)--(1.5,0);
\draw[xshift=12+40, rotate=60] (1,0)--(1.25,.5);
\draw[xshift=12+40,red,fill=red] (0,0) circle (.13);
\draw[xshift=12+40,rotate=60,blue,fill=blue] (1,0) circle (.08);
\draw[xshift=32+40, rotate=60] (0,0)--(1.5,0);
\draw[xshift=32+40, rotate=60] (1,0)--(1.25,.5);
\draw[xshift=32+40,blue,fill=blue] (0,0) circle (.13);
\draw[xshift=32+40,rotate=60,red,fill=red] (1,0) circle (.08);
\draw[xshift=12+80, rotate=60] (0,0)--(1.5,0);
\draw[xshift=12+80, rotate=60] (1,0)--(1.25,.5);
\draw[xshift=12+80,red,fill=red] (0,0) circle (.13);
\draw[xshift=12+80,rotate=60,blue,fill=blue] (1,0) circle (.08);
\draw[xshift=32+80, rotate=60] (0,0)--(1.5,0);
\draw[xshift=32+80, rotate=60] (1,0)--(1.25,.5);
\draw[xshift=32+80,blue,fill=blue] (0,0) circle (.13);
\draw[xshift=32+80,rotate=60,red,fill=red] (1,0) circle (.08);
\draw[xshift=12+120, rotate=60] (0,0)--(1.5,0);
\draw[xshift=12+120, rotate=60] (1,0)--(1.25,.5);
\draw[xshift=12+120,red,fill=red] (0,0) circle (.13);
\draw[xshift=12+120,rotate=60,blue,fill=blue] (1,0) circle (.08);
\draw[xshift=32+120, rotate=60] (0,0)--(1.5,0);
\draw[xshift=32+120, rotate=60] (1,0)--(1.25,.5);
\draw[xshift=32+120,blue,fill=blue] (0,0) circle (.13);
\draw[xshift=32+120,rotate=60,red,fill=red] (1,0) circle (.08);
\draw[xshift=12+160, rotate=60] (0,0)--(1.5,0);
\draw[xshift=12+160, rotate=60] (1,0)--(1.25,.5);
\draw[xshift=12+160,red,fill=red] (0,0) circle (.13);
\draw[xshift=12+160,rotate=60,blue,fill=blue] (1,0) circle (.08);
\draw[xshift=32+160, rotate=60] (0,0)--(1.5,0);
\draw[xshift=32+160, rotate=60] (1,0)--(1.25,.5);
\draw[xshift=32+160,blue,fill=blue] (0,0) circle (.13);
\draw[xshift=32+160,rotate=60,red,fill=red] (1,0) circle (.08);

\draw[xshift=12+200, rotate=60] (0,0)--(1.5,0);
\draw[xshift=12+200, rotate=60] (1,0)--(1.25,.5);
\draw[xshift=12+200,red,fill=red] (0,0) circle (.13);
\draw[xshift=12+200,rotate=60,blue,fill=blue] (1,0) circle (.08);
\draw[xshift=32+200, rotate=60] (0,0)--(1.5,0);
\draw[xshift=32+200, rotate=60] (1,0)--(1.25,.5);
\draw[xshift=32+200,blue,fill=blue] (0,0) circle (.13);
\draw[xshift=32+200,rotate=60,red,fill=red] (1,0) circle (.08);

\end{tikzpicture}
}

\newcommand{\hextoRAB}{
$\begin{array}{lll}
\begin{tikzpicture}
\draw [rotate=60,dashed](-.5,.866)--(.5,.866);
\draw [rotate=120,dashed](-.5,.866)--(.5,.866);
\draw [rotate=180,dashed](-.5,.866)--(.5,.866);
\draw [rotate=180,dashed](-.5,.866)--(.5,.866);
\draw [rotate=-60,dashed](-.5,.866)--(.5,.866);
\draw [rotate=-120,dashed](-.5,.866)--(.5,.866);
\draw [rotate=0,dashed](-.5,.866)--(.5,.866);
\draw[rotate=60](0,0)--(0,.866);
\draw[rotate=120](0,0)--(0,.866);
\draw[rotate=180](0,0)--(0,.866);
\draw[rotate=240](0,0)--(0,.866);
\draw[rotate=300](0,0)--(0,.866);
\draw[rotate=0](0,0)--(0,.866);
\draw[rotate=30,red,very thick, dashed](0,0)--(0,1);
\draw[rotate=-30,blue,very thick](0,0)--(0,1);
\draw[rotate=90,blue,ultra thick](0,0)--(0,1);
\draw[rotate=-90,red,ultra thick, dashed](0,0)--(0,1);
\draw[rotate=150,red,very thick,dashed](0,0)--(0,1);
\draw[rotate=-150,blue,very thick](0,0)--(0,1);
\end{tikzpicture}

&\hspace{.5in}&

\begin{tikzpicture}
\draw[rotate=60](0,0)--(0,1);
\draw[rotate=120](0,0)--(0,1);
\draw[rotate=180](0,0)--(0,1);
\draw[rotate=240](0,0)--(0,1);
\draw[rotate=300](0,0)--(0,1);
\draw[rotate=0](0,0)--(0,1);
\draw[rotate=0,red,fill=red](0,0) circle(.1);
\end{tikzpicture}

\end{array}$
}

\newcommand{\cubenolink}{

\begin{tikzpicture}

\draw (0,0)--(-1,1);
\draw (0,0)--(-0,1);
\draw (0,0)--(1,1);
\draw[ultra thick](-1,1)--(-1,2);
\draw[ultra thick](-1,1)--(0,2);
\draw(0,1)--(-1,2);
\draw(0,1)--(1,2);
\draw(1,1)--(0,2);
\draw(1,1)--(1,2);
\draw[fill=gray,opacity=.4] (0,0)--(-1,1)--(-1,2)--(0,1)--(0,0);
\draw[fill=gray,opacity=.5] (0,0)--(-1,1)--(0,2)--(1,1)--(0,0);
\draw (-.45,.45)--(0,.7)--(.45,.45);
\draw[dashed] (-.45,.45)--(.45,.45);
\draw[fill=gray,opacity=.4] (0,0)--(0,1)--(1,2)--(1,1)--(0,0);
\draw[fill=black](0,0) circle(.06);
\draw[fill=black](-1,1) circle(.06);
\draw[fill=black](0,1) circle(.06);
\draw[fill=black](1,1) circle(.06);
\draw[fill=black](-1,2) circle(.06);
\draw[fill=black](0,2) circle(.06);
\draw[fill=black](1,2) circle(.06);
\end{tikzpicture}
}
\begin{abstract}
\hspace{.2in} 
We begin by describing an underlying right angled building structure of any graph product of buildings.  We define the group of \emph{structure preserving automorphisms} of such an underlying right angled building and show that this group is the automorphism group of the graph product of buildings.  Theorem \ref{thm:ext} gives an explicit construction of the group of structure preserving automorphisms of a right angled building as an inverse limit of the structure preserving automorphism group of larger and larger combinatorial balls centered at a fixed chamber.

Finally in Section \ref{sec:GenGraphProd} we show that the notion of generalized graph product of a collection of groups $\{G_i\}_I$ from \cite{janswiat} with $G_i$ acting on $\bldg_i$ corresponds to the group of automorphisms of $\prod_\graph \bldg_i$ generated via lifts of the action of the product of the groups on $\prod_I \bldg_i$ up to a kernel of the $G_i$ actions.  We use these results to show that if each $\bldg_i$ is finite, with $G_i=\aut(\bldg_i)$ then the generalized graph product of the $G_i$ is residually finite.

\end{abstract}
20F65, 20F55, Graph Products, Buildings, Coxeter Groups, Reflection Groups
\section{Coxeter Groups}\label{CoxeterGroups}

Finite reflection groups were first studied in connection with Lie groups and Lie algebras. Coxeter classified both spherical reflection groups and cocompact Euclidean reflection groups in \cite{Cox34}.  The only other irreducible symmetric space is hyperbolic space, so any reflection group splits into factors of spherical groups, Euclidean groups and hyperbolic groups.  In \cite{TitsCoxeter} Tits introduced the following abstract notion of a reflection group, which is where we will begin.

A \emph{Coxeter matrix over a set $S=\{s_i\}_{i\in I}$}  is an $S \times S$ symmetric matrix $M = (m(s,t))$ with each diagonal entry equal to $1$ and each off-diagonal entry equal to either an integer $\geq 2$ or the symbol $\infty$.  The matrix $M$ determines a presentation of a group $W$ as follows: the set of generators is $S$ and the relations have the form $(st)^{m(s,t)}$ where $(s,t)$ ranges over all pairs in $S\times S$ such that $m(s,t)\neq \infty$. The pair $(W,S)$ is a \emph{Coxeter system} and $W$ is a \emph{Coxeter group}. The \emph{rank} of a Coxeter system is the cardinality of the generating set, $|S|$.  When the generating set is understood we will refer to this as the rank of $W$.

We say that a \emph{word} for $w$ in $(W,S)$ is a finite sequence $(s_1, s_2, \dots, s_n)$ with $s_i$ in $S$ such that $s_1 \cdot s_2 \cdots s_n=w$.  A word is said to be \emph{reduced} if it is of minimal length.  We denote $\ell(w)$ to be the length of any reduced word for $w$.  While there need not be a unique reduced word for $w$, the length of all reduced words for $w$ is the same.  This follows from \emph{Tits' solution to the word problem} \cite{titssol} which states that the only two moves necessary to reduce a word are 
\begin{enumerate}
\item replacing a subsequence of length $2k$ of the form $(s, t,s,t, \dots,s,t)$ with a subsequence of length $2(n-k)$ of the form $(t, s, t,s, \dots, t, s)$ when $m(s,t)=n$ in the Coxeter matrix, and 
\item removing any subsequence $(s,s).$
\end{enumerate}

Note that the definition above is independent of geometry.  Tits developed Coxeter groups in order to develop a unifying theory to study Lie groups and Lie algebras. Thus Coxeter groups were introduced as a purely group theoretical tool to work with.  We next define a special class of Coxeter group that will appear throughout.

A \emph{right angled Coxeter group} is a Coxeter group defined by a Coxeter matrix with entries either $1$, $2$ or the symbol $\infty$. That is, the only relations aside from the generators being involutions are of the form $(st)^2$ for $s,t$ any two generators in $S$.  We can construct such a Coxeter group from a graph in the following manner.  Given a simplicial graph $\graph$ with vertex set $S$ we define the \emph{right angled Coxeter group associated to $\graph$} as the Coxeter group generated by $S$ with entries in the Coxeter matrix given by $m(s,s)=1$ as required, $m(s,t)=2$ if $\{s,t\}$ is an edge in $\graph$ and $m(s,t)=\infty$ otherwise. Any Coxeter group which arises in this manner is a right angled Coxeter group and any right angled Coxeter group has a graph it is associated to, namely the graph with edge set $S$ and edges $\{s,t\}$ when $m(s,t)=2$. 

We say that $W$ is \emph{spherical} if it is finite.  We denote the subgroup of $W$ generated by $T \subset S$ as $W_T$.  We say that $T$ is \emph{spherical} if $W_T$ is spherical. Spherical subsets will become important later when we begin constructing appropriate geometries from these groups.  We begin by first defining abstract buildings.

\section{Buildings}\label{sec:bldg}
We will use the more traditional definition of building.

\begin{definition}\label{def:bldg}
Suppose $(W,S)$ is a Coxeter system.  A \emph{building of type $(W,S)$} is a pair $(\bldg,\delta)$ consisting of a nonempty set $\bldg$, whose elements are called \emph{chambers} and a function $\delta: \bldg \times \bldg \to W$ so that the following conditions hold for all chambers $C, D \in \bldg$.
\renewcommand{\theenumi}{(WD\arabic{enumi})}
\begin{enumerate}
\item\label{WD1} $\delta(C,D)=1$ if and only if $C=D$.
\item \label{WD2}If $\delta(C,D)=w$ and $C' \in \bldg$ satisfies $\delta(C',C)=s\in S$, then $\delta(C',D) = sw$ or $w$.  If in addition $\length(sw)= \length(w)+1$, then $\delta (C',D)=sw.$
\item \label{WD3} If $\delta(C,D)=w$, then for any $s \in S$ there is a chamber $C' \in \bldg$ such that $\delta(C',C)=s$ and $\delta(C',D)=sw$.
\end{enumerate}
\renewcommand{\theenumi}{\arabic{enumi}}
\end{definition}

A treatment of equivalent definitions can be found in \cite{abbrown}.  

We say two chambers $C$ and $D$ are \emph{$s$-adjacent} if $\delta(C,D)=s$ and we refer to $s$ as the \emph{type} of the adjacency.  We write $C \sim_s D$ if $C$ is \emph{$s$-equivalent} to $D$, that is if $C$ is $s$-adjacent to $D$ or $C=D$.   For any chamber $C$ and subset $T \subset S$ the \emph{$T$-residue containing $C$ or $\res_T(C)$} is the set of all chambers in $\bldg$ that can be connected to $C$ by adjacencies whose types lie only in $T$.  The sequence of chambers in such a path connecting $C$ to $C'$, $\{C_0=C,C_1,\dots, C_n=C'\}$ with $C_i \equiv_{s_i} C{i+1}$ with $s_i\in T$  is called a \emph{gallery of type $T$.}  If the set is understood, then we write the residue as $\res(C)$.  If we are choosing a residue without specifying a chamber it contains, we will write $\panel_T$ and if $T =\{s\}$ for some $s$, then we can write $\res_s (C)$, instead of $\res_T(C)$.  We will often  refer to such residues as the \emph{$s$-panel containing $C$}.

\begin{example}\label{Rank1}
Start with a Coxeter system $(W,S)$ where $S=\{s\}$ and $W =\Ztwo$.  A building $\bldg$ of type $(W,S)$ is just a set $\bldg$ with distance function $\delta(C,C')=s$ if $C\neq C'$ and $\delta(C,C)=1$.  We call such a building a \emph{rank-$1$ building.}
\end{example}

In general the \emph{rank of a building} $\bldg$ of type $(W,S)$ is just the cardinality of $S$.  The rank of a residue $\panel_T$ is just the cardinality of $T$.

\begin{example}\label{thin}
Fix some Coxeter system $(W,S)$.  Then $W$ is a building with $W$-distance $\delta(w,w')= w^{-1} w'$.  Thus two chambers $w$ and $w'$ are $s$-adjacent if $ws=w'$.  In such a building every panel has cardinality two.  Any building in which every panel has cardinality exactly two can be identified with its Coxeter group in this way.  Since every panel is of minimal size we call this building the \emph{thin building of type $(W,S)$}.
\end{example}

In most cases of interest to us at least some panels will have size larger than two.  If every panel in a building has size at least three, we say the building is \emph{thick}.  We will often be concerned with thick buildings and this makes it important to consider panels and, more generally, residues.  

\begin{prop}\label{subbldg}
Any residue of type $T$ is a building of type $(W_T,T)$.
\end{prop}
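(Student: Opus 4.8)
The plan is to equip the residue $R = \res_T(C)$ with the restricted Weyl distance $\delta_T := \delta|_{R\times R}$ and to verify the three building axioms for the Coxeter system $(W_T,T)$. The first point that must be settled is that this restriction is even well defined as a map into $W_T$, i.e. that $\delta(D,D')\in W_T$ whenever $D,D'\in R$. I would prove this by induction on the length of a type-$T$ gallery $D=D_0,D_1,\dots,D_n=D'$ joining the two chambers, which exists by the definition of a residue. The case $n=0$ is trivial. For the inductive step, the subgallery $D_1,\dots,D_n$ gives $\delta(D_1,D_n)=:w''\in W_T$ by induction, and $\delta(D_0,D_1)=s_1\in T$; applying \ref{WD2} with $(C,D,C')=(D_1,D_n,D_0)$ yields $\delta(D_0,D_n)\in\{s_1 w'',\,w''\}$, both of which lie in $W_T$. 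Hence $\delta_T$ indeed maps $R\times R$ into $W_T$.

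Before checking the axioms I would record the standard fact that $(W_T,T)$ is itself a Coxeter system and that its length function coincides with the restriction of $\ell$ to $W_T$; this is the classical theorem on standard parabolic subgroups of a Coxeter group (see \cite{abbrown}). This identification is precisely what allows the length hypothesis appearing in \ref{WD2} to be read interchangeably in $(W,S)$ and in $(W_T,T)$, and it is the one genuinely nontrivial ingredient imported from Coxeter theory. With it in hand, \ref{WD1} is immediate, since $\delta_T$ is a restriction of $\delta$: for $D,D'\in R$ we have $\delta_T(D,D')=1$ iff $D=D'$. For \ref{WD2}, given $D,D'\in R$ with $\delta_T(D,D')=w\in W_T$ and $D''\in R$ with $\delta_T(D'',D)=s\in T$, axiom \ref{WD2} in $\bldg$ gives $\delta(D'',D')\in\{sw,\,w\}$, and the length-agreement fact converts the condition $\ell(sw)=\ell(w)+1$ between the two systems, delivering exactly the statement required inside $R$.

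The only axiom whose verification needs a further observation is \ref{WD3}. Given $D,D'\in R$ with $\delta_T(D,D')=w$ and $s\in T$, axiom \ref{WD3} in $\bldg$ produces a chamber $D''\in\bldg$ with $\delta(D'',D)=s$ and $\delta(D'',D')=sw$. The point is to check that this $D''$ actually lies in $R$: since $\delta(D'',D)=s\in T$, the chamber $D''$ is $s$-adjacent to $D\in R=\res_T(C)$ with $s\in T$, so appending this one adjacency to a type-$T$ gallery from $C$ to $D$ exhibits $D''$ in the same $T$-residue. Thus $D''\in R$ and \ref{WD3} holds for $(R,\delta_T)$.

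I expect the principal obstacle to be conceptual rather than computational, namely pinning down the two background facts: that $\delta$ restricted to a residue takes values in $W_T$, and that lengths computed in $(W_T,T)$ and in $(W,S)$ agree on $W_T$. Once these are in place the axioms transfer almost verbatim, with the residue-closure remark in \ref{WD3} being the only step where one must argue that a chamber produced in the ambient building necessarily remains inside $R$.
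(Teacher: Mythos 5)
Your proof is correct and takes the same (and really the only) route as the paper, which dismisses this proposition with the single line ``This follows from definitions'': you simply carry out that verification in full, checking \ref{WD1}--\ref{WD3} for the restricted distance function. The two background facts you isolate --- that $\delta$ restricted to a $T$-residue takes values in $W_T$, and that the length function of $(W_T,T)$ agrees with the restriction of $\ell$ --- together with the closure argument in \ref{WD3} are exactly the details the paper leaves implicit, and your handling of them is sound.
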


This follows from definitions.  Further, for any $C\in \bldg$ and $T\subset S$, we have that $C$ is contained in exactly one $T$-residue.  If $\panel$ is a $T$-residue, then $\panel$ can also be viewed as a building of type $W_T$.  In Example~\ref{thin} the subset corresponding to the subgroup $W_T$ is a $T$ residue, and thus is a thin building of type $(W_T,T)$. In this case the $T$ residues correspond to the right cosets of the subgroup $W_T$ in $W$.

\begin{prop}\label{bldg1}
For any building $\bldg$, any residue $\panel \subset \bldg$ and any chamber $C_0 \in \bldg$ there exists a unique chamber $C \in \panel$ such that $\ell(\delta(C,C_0))$ is minimal.
\end{prop}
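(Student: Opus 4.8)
The plan is to reduce the whole statement to a single \emph{gate property}: I will show that a chamber $C\in\panel$ minimizing $\ell(\delta(C,C_0))$ in fact satisfies $\ell(\delta(D,C_0))=\ell(\delta(D,C))+\ell(\delta(C,C_0))$ for \emph{every} $D\in\panel$. Since $\ell(\delta(D,C))>0$ whenever $D\neq C$, this identity immediately forces the minimizer to be unique, so the entire content is packaged into that additivity formula. Writing $\panel$ as a $T$-residue, I will use Proposition~\ref{subbldg} throughout to regard $\panel$ as a building of type $(W_T,T)$, so that $\delta(D,D')\in W_T$ for all $D,D'\in\panel$ and minimal galleries of type $T$ are available inside $\panel$.

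Existence is immediate: $\panel$ is nonempty and $\{\ell(\delta(C,C_0)):C\in\panel\}$ is a nonempty set of nonnegative integers, so it attains a minimum at some $C$; set $w=\delta(C,C_0)$. The first real step is to characterize this minimizer group-theoretically, namely that $\ell(sw)>\ell(w)$ for every $s\in T$, i.e. that $w$ is the minimal-length element of the coset $W_T w$. To see this, suppose $\ell(sw)<\ell(w)$ for some $s\in T$ (recall that left multiplication by a generator changes length by exactly $\pm1$). Then \ref{WD3} applied to $\delta(C,C_0)=w$ yields a chamber $C'$ with $\delta(C',C)=s$ and $\delta(C',C_0)=sw$; since $s\in T$ we have $C'\in\panel$, and $\ell(sw)<\ell(w)=\ell(\delta(C,C_0))$ contradicts minimality of $C$. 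This is the only place \ref{WD3} enters, and its role is precisely to guarantee that the ``descending'' neighbor actually exists inside $\panel$.

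For the gate property I would induct on $n=\ell(\delta(D,C))$, the case $n=0$ being trivial. Given $D\neq C$, pick a minimal gallery of type $T$ from $D$ to $C$ and let $D\sim_s D'$ be its first step, so that $\ell(\delta(D',C))=\ell(\delta(D,C))-1$ and hence $\delta(D,C)=s\,\delta(D',C)$; put $u=\delta(D',C)\in W_T$. By the inductive hypothesis $\delta(D',C_0)=uw$ with $\ell(uw)=\ell(u)+\ell(w)$. Applying \ref{WD2} with $\delta(D,D')=s$ gives $\delta(D,C_0)\in\{suw,\,uw\}$, with $\delta(D,C_0)=suw$ once I know $\ell(suw)=\ell(uw)+1$. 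The main obstacle, and the one purely Coxeter-theoretic input, is the standard lemma that a minimal coset representative multiplies additively: because $w$ is minimal in $W_T w$, one has $\ell(xw)=\ell(x)+\ell(w)$ for all $x\in W_T$ (see \cite{abbrown}). Taking $x=su\in W_T$ and using $\ell(su)=\ell(u)+1$ gives $\ell(suw)=\ell(su)+\ell(w)=\ell(uw)+1$, so \ref{WD2} forces $\delta(D,C_0)=suw=\delta(D,C)\,w$ with $\ell(\delta(D,C_0))=\ell(\delta(D,C))+\ell(w)$. This closes the induction, and since the right-hand side exceeds $\ell(w)$ for every $D\neq C$, the minimizer $C$ is unique.
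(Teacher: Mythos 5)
Your proof is correct, but it takes a genuinely different route from the paper, which disposes of Proposition \ref{bldg1} in a single sentence: ``This proposition follows from Tits' solution to the word problem,'' with no argument given. You instead establish the stronger \emph{gate property}: the minimizer $C$ satisfies $\ell(\delta(D,C_0))=\ell(\delta(D,C))+\ell(\delta(C,C_0))$ for every $D\in\panel$, proved by induction on $\ell(\delta(D,C))$ inside the residue (using Proposition \ref{subbldg} to work in the building of type $(W_T,T)$), with the descent step handled by axiom \ref{WD3}, the inductive step by axiom \ref{WD2}, and the single purely Coxeter-theoretic input being the standard parabolic-coset lemma that a minimal representative $w$ of $W_Tw$ multiplies additively, $\ell(xw)=\ell(x)+\ell(w)$ for all $x\in W_T$. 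That lemma is precisely where the word-problem machinery the paper gestures at actually resides, so your argument can be read as an honest unpacking of the paper's citation; but it also buys strictly more than the statement requires: it exhibits $C$ as a genuine gate, which is the property the paper implicitly leans on when it later calls $C$ ``the projection of $C_0$ onto $\panel$'' and uses it in the partition argument for $\mathcal{P}_1$ and $\mathcal{P}_2$ and in Lemma \ref{lem:ext}. The trade-off is that you import the coset lemma from \cite{abbrown} (and, implicitly, the standard facts that minimal galleries of reduced type exist in a building and that length in $W_T$ agrees with length in $W$), whereas the paper keeps everything, unspecified, at the level of reduced words; your version is the one a reader could actually check, and it matches the textbook treatment of projections.
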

This proposition follows from Tits' solution to the word problem.  The chamber $C$ is called the \emph{projection of $C_0$ onto $\panel$}.  
\begin{definition}\label{def:RAB}
A \emph{right angled building} is a building associated to some Coxeter system $(W,S)$ where $W$ is a right angled Coxeter group. We say a right angled building, is \emph{regular} if for every $s$ in $S$ the cardinality of every $s$-panel is constant.
\end{definition}

\begin{example} \label{tree}
An infinite tree $\mathcal{T}$ with no leaves can be viewed as a right angled building of type  $\Ztwo \star \Ztwo$ with chamber set the edge set of $\mathcal{T}$ and panels the set of edges in the star of a vertex, which we can identify with the vertices.  The type of the panels alternates between vertices.  If $\mathcal{T}$ is bi-regular then $\mathcal{T}$ is a regular right angled building.  
\end{example}

\begin{remark}\label{RABcard}
It is known that a regular right angled building is completely determined by its type and the cardinality of each $s$-panel.
This was written down by Haglund and Paulin in \cite{RegularRABUnique}.  Thus in Example~\ref{tree} combinatorially the trivalent tree is the unique building of type $\Ztwo \star \Ztwo$ with all panels of size three.
\end{remark}

The example of a tree gives a suggestion of the geometry which arises from this construction.  We will now make rigorous this identification.  Consider an arbitrary space $X$.  A \emph{mirror structure} over an arbitrary set $S$ on $X$ is a family of subspaces $(X_s)$ indexed by $S$.  The $X_s$ are called \emph{mirrors}.  Given a mirror structure on $X$, a subspace $Y \subset X$ inherits a mirror structure by setting $Y_s := Y \cap X_s$.  For each nonempty subset $T \subset S$, define subspaces $X_T$ and $X^T$ by 
$$ X_T:= \bigcap_{s\in T} X_s \textrm{ and } X^T:= \bigcup_{s\in T} X_s$$

Put $X_\emptyset := X$ and $X^\emptyset := \emptyset$. Given a subset $c$ of $X$ or a point $x \in X$ put 
\begin{eqnarray*}S(c) &=& \{s\in S\ |\ c\subset X_s\}\\
S(x) &=& \{s\in S\ |\ x\in X_s\}.
\end{eqnarray*}
A space with a mirror structure is a \emph{mirror space}.  Given a building $\bldg$ of type $(W,S)$ and a mirrored space $X$ over $S$ define an equivalence relation $\sim$ on $\bldg \times X$ by $(C,x) \sim (D,y)$ if and only if $x=y$ and $\delta(C,D) \in S(x)$. We define the \emph{$X$-realization} of $\bldg$, denoted $\real(\bldg,X)$ as
$$\real(\bldg,X):= (\bldg \times X) /\sim. $$

In words, we identify $s$-mirrors of two chambers when those chambers are $s$-adjacent. We define the realization of a chamber $C\in \bldg$ inside a realization $\real(\bldg,X)$ to be the image of $(C,X)$ and the realization of a residue as the union of the realizations of the chambers contained in that residue. Thus the realization of a $T$ residue in $\real(\bldg,X)$ is the $X$ realization of the residue as a building of type $(W_T,T)$. 

We say that two panels $\panel_s$ and $\panel_t$ in $\bldg$ are \emph{adjacent} if they are contained in a spherical residue of type $\{s,t\}$ and the set of distances between chambers in $\panel_s$ and $\panel_t$ is a coset of $W_{\{s\}}$ in $W_{\{s,t\}}$.  We extend these adjacencies to equivalence classes and define a \emph{wall} in $\bldg$ as an equivalence class of panels. By the realization of a wall we mean the union of the mirrors corresponding to the panels in the wall.  Let $\mathcal{D}$ be a convex subset of $\bldg$.  We define the \emph{boundary of $\mathcal{D}$} to be the set of all panels in $\bldg$ which have proper intersection with $\mathcal{D}$. 

We now concern ourselves with what it means for a realization of a building to be `nice.'  There are several desirable properties we would like our realizations to have.  Most notably, we would like the realization to be a simply connected simplicial complex.  In order for the realization to be a simplicial complex we must begin with $X$, the realization of a chamber, being a simplicial complex, though in general $X$ will not be a simplex.  For the realization to be simply connected we must make sure all spherical residues are simply connected.  To this end, denote the poset of spherical subsets of $S$ partially ordered by inclusion by $\poset(W,S)$, or $\poset$ if the Coxeter system is understood.  For any $T\subset S$ define $\poset_{\geq T}$ to be the poset of spherical subsets of $S$ which contain $T$.

\begin{definition}\label{daviscomplex}
Let $\poset$ be as above and put $X = |\poset|$, the geometric realization of the poset $\poset$.  (Recall that the geometric realization of a poset has simplices the chains in $\poset$.)  Define a mirror structure on $X$ as follows: for each $s \in S$ put $X_s:= |\poset _{\geq \{s\}}|$ and for each $T\in \poset$, let $X_T := | \poset_{\geq T}|$.  We say the complex $X$ with this mirror structure is the \emph{Davis chamber} of $(W,S)$.  In general we denote the Davis chamber as $K.$ 
\end{definition}

It follows from the Davis chamber being a realization of a poset that $K$ is a flag complex.  Often we determine the $1$-skeleton and define $K$ to be the flag complex with that $1$-skeleton.

The \emph{nerve} of $(W,S)$, written $L(W,S)$, is the poset of nonempty elements in $\poset$.  It is an abstract simplicial complex.  Thus for a Coxeter system $(W,S)$ we have a simplicial complex $L(W,S)$ with vertex set $S$ and whose simplicies are spherical subsets of $S$.  The Davis chamber can also be defined as the cone on the barycentric subdivision of $L(W,S)$.  These two definitions of the Davis chamber are equivalent.  The empty set in Definition~\ref{daviscomplex} corresponds to the cone point above. 

It is proved in \cite{davisbldgcat} that for any building $\bldg$ of type $(W,S)$ with $K$ its Davis chamber  $\real(\bldg,K)$ is contractible.  We say this is the \emph{standard realization} of $\bldg$.

\begin{example}
Let $W$ be the dihedral group of order six, with generating set $S=\{s,t\}$.  Let $\bldg$ be the thin building of type $(W,S)$ as in Example \ref{thin}.  Then the left hexagon in Figure \ref{hexagons} is the standard realization of $\bldg$ while the right hexagon is the realization with just a cone on the generators.  The mirrors in a single chamber in each are emphasized.\\
\end{example}

\begin{figure}[htb]
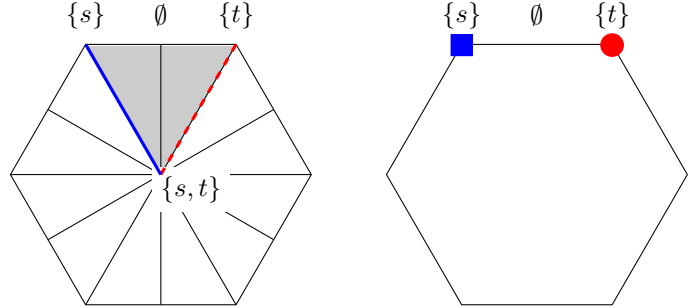

\centering
\hexagons
\caption{Two realizations of the thin building for the dihedral group of order six; the left with the Davis chamber, and the right with an edge.}
\label{hexagons}
\end{figure}

In \cite{exbldg} Davis introduced a construction of buildings from covering spaces.  We will outline this construction in \ref{sec:cover}.  Next we will discuss graph products of Coxeter groups and buildings as an application of this construction (also from \cite{exbldg}).  In Section \ref{sec:graphprodRAB} we will show that any graph product of buildings can be endowed with a new distance function making it a right angled building thus giving a realization of graph products of buildings as CAT(0) cube complexes.  We will then offer an immediate application of this fact by classifying when a graph product of buildings is Gromov hyperbolic as a generalization of Mousong's result in \cite{MoussongThesis}.

\section{Buildings Via Covering Spaces}\label{cover}\label{sec:cover}

We start with a Coxeter system $(W,S)$ with Coxeter matrix $(m(s,t))$ and a building $\bldg$ of type $(W,S)$.  Let $(m'(s,t))$ be a new $S\times S$ Coxeter matrix where $m'(s,t)=m(s,t)$ if $m'(s,t)\neq \infty$.  Denote the Coxeter system with the new Coxeter matrix as $(W',S)$.  

Let $K'$ be the Davis chamber for the Coxeter system $(W',S)$.  The non-standard realization of $\bldg$ with this Davis chamber, $\real(\bldg,K')$ is not simply connected unless $(m(s,t))=(m'(s,t))$.  In \cite{exbldg} Davis proved the following:

\begin{theorem}
The universal cover of $\real(\bldg,K')$, $\widetilde{\real(\bldg,K')}$, is the standard realization of a building of type $(W',S)$.
\end{theorem}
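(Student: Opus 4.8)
The plan is to build the type-$(W',S)$ building $\bldg'$ directly out of the universal cover and only afterwards recognize it as a building. Write $X:=\real(\bldg,K')$ and let $p:\widetilde{X}\to X$ be its universal cover. Each chamber-realization (the image of some $(C,K')$) is a cone on a barycentric subdivision and hence contractible, and each mirror $X_s=|\poset(W',S)_{\geq\{s\}}|$ is also a cone, since the poset $\poset(W',S)_{\geq\{s\}}$ has least element $\{s\}$; both therefore lift homeomorphically through $p$. I take the chamber set $\bldg'$ of $\widetilde{X}$ to be the lifts of the chamber-realizations of $\bldg$, declare two such chambers $s$-adjacent when they meet along a common lift of an $s$-mirror, and let $\pi:\bldg'\to\bldg$ be the induced projection. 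Because a single $s$-panel realizes as a union of chambers along their common contractible $s$-mirror, that realization is again simply connected and lifts, so $\pi$ is a morphism of chamber systems over $S$ that restricts to a bijection on every $s$-panel.

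The heart of the matter is to identify the rank-$2$ residues of $\bldg'$, because a building is precisely a chamber system whose realization is simply connected and all of whose rank-$2$ residues are the prescribed rank-$2$ buildings (Tits' local characterization of buildings). There are two cases according to whether $m'(s,t)$ is finite. When $m'(s,t)=m(s,t)<\infty$ the set $\{s,t\}$ is spherical for $(W',S)$, so the realization in $X$ of a $\{s,t\}$-residue of $\bldg$ is the standard realization of the generalized $m(s,t)$-gon $\res_{\{s,t\}}$ with its Davis chamber, which is contractible by \cite{davisbldgcat}; it therefore lifts homeomorphically and the corresponding residue of $\bldg'$ is again a generalized $m'(s,t)$-gon. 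When $m'(s,t)=\infty$ the mirrors $X_s$ and $X_t$ are disjoint in $K'$ (since $\poset(W',S)_{\geq\{s,t\}}=\emptyset$), so the realization of $\res_{\{s,t\}}$ in $X$ is obtained by gluing copies of $K'$ along disjoint mirrors and is \emph{not} simply connected; here I must show that each component of its preimage in $\widetilde{X}$ is a rank-$2$ building of type $(\Ztwo\star\Ztwo,\{s,t\})$. I would prove this by analyzing the covering of this rank-$2$ realization in isolation: the apartments of the generalized $m(s,t)$-gon are $2m(s,t)$-cycles of chambers whose realizations become the embedded loops that generate $\pi_1$, and unwrapping them turns the residue into a rank-$2$ building whose apartments are lines, that is, a tree in the sense of Example~\ref{tree}.

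Granting the rank-$2$ analysis, I finish by globalization. By construction $\widetilde{X}$ is assembled from copies of $K'$ glued along mirrors exactly according to the chamber system $\bldg'$, so $\widetilde{X}=\real(\bldg',K')$. Since $\widetilde{X}$ is a universal cover it is simply connected, and every rank-$2$ residue of $\bldg'$ has just been shown to be the rank-$2$ building of type $(W'_{\{s,t\}},\{s,t\})$ demanded by the matrix $(m'(s,t))$. Tits' characterization then yields that $\bldg'$ is a building of type $(W',S)$, and because $K'$ is its Davis chamber, $\widetilde{X}=\real(\bldg',K')$ is the standard realization of $\bldg'$, as claimed.

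I expect the genuine obstacle to be the case $m'(s,t)=\infty$: one must verify that passing to the global universal cover restricts, residue by residue, to the universal cover of each non-simply-connected $\{s,t\}$-residue realization, and that this local universal cover is exactly a tree. Making the ``links are correct'' bookkeeping precise—so that simple connectivity of the realization $X$ is the same condition Tits' theorem consumes, and so that no spurious identifications of chambers occur in $\widetilde{X}$—is where the care is needed; the finite-$m$ case and the bijectivity of $\pi$ on panels are comparatively routine consequences of the contractibility of spherical residues.
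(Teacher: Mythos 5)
First, a point of comparison: the paper offers no proof of this statement at all --- it is quoted from Davis \cite{exbldg} --- so your proposal must be judged against the literature rather than against an in-paper argument. Your overall skeleton (take the chambers of $\widetilde{\real(\bldg,K')}$ to be lifts of chamber realizations, check that spherical rank-$2$ residues lift homeomorphically while residues with $m'(s,t)=\infty$ unwrap into trees, then apply a local-to-global theorem) is the natural route. But the theorem you invoke is misstated, and the misstatement is not cosmetic: it is \emph{false} that a chamber system with simply connected realization and correct rank-$2$ residues is a building. Tits' local approach theorem carries an essential further hypothesis: every rank-$3$ residue of spherical type must be $2$-covered by a building. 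The classical flat $C_3$-geometries (e.g.\ Neumaier's $A_7$-geometry) have all rank-$2$ residues isomorphic to the prescribed generalized polygons and are simply $2$-connected, yet are not buildings; so the criterion as you state it cannot be correct. In the present setting the extra hypothesis does hold --- if $\{s,t,u\}$ is spherical for $(W',S)$ then $m'$ agrees with $m$ on all three pairs, so the $\{s,t,u\}$-residues of $\bldg$ are spherical buildings whose $K'$-realizations are contractible and therefore lift homeomorphically, exactly as in your finite rank-$2$ case --- but your proof never states or verifies this condition.

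Second, the two items you defer as ``bookkeeping'' are the actual content of the theorem, and one of them needs an idea you do not supply. For $m'(s,t)=\infty$, a component of the preimage of the $\{s,t\}$-residue realization $R$ is the covering of $R$ corresponding to $\ker(\pi_1(R)\to\pi_1(\real(\bldg,K')))$; it is a tree if and only if the inclusion $R\hookrightarrow\real(\bldg,K')$ is $\pi_1$-injective, and a subcomplex of a space can of course carry loops that are essential in the subcomplex but die in the ambient space, so this does not come for free. The clean fix is building-theoretic rather than topological: the gate projection onto a residue (Proposition \ref{bldg1}) sends $s$-adjacent chambers to $s$-adjacent or equal chambers, hence $(C,x)\mapsto(\proj_\panel(C),x)$ descends to a continuous retraction of $\real(\bldg,K')$ onto $R$, and a retract is $\pi_1$-injected. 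Likewise, the identification of topological simple connectivity of the $K'$-realization with the chamber-system simple $2$-connectivity consumed by Tits' theorem is a statement requiring proof, not a convention: coverings of $\real(\cdot,K')$ correspond to chamber-system coverings preserving all $(W',S)$-spherical residues, which are not a priori the same as $2$-coverings (indeed the covering you study is \emph{not} a $2$-covering of chamber systems, since it changes the rank-$2$ residues with $m'(s,t)=\infty$ from polygons to trees). Until the rank-$3$ check, the $\pi_1$-injectivity, and this dictionary are supplied, what you have is a plausible strategy and an accurate map of where the difficulties lie, but not a proof.
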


In light of this theorem we will usually denote $W'$ as $\tW$.  We let $\tbldg$ be the building with this realization.  Explicitly we define $\tbldg$ to be the set of copies of $K'$ in the realization $\widetilde{\real(\bldg,K')}$.  We define the $s$-adjacencies on $\tbldg$ to be those copies of $K'$ which share an $s$-mirror.  We can extend this to a $\t{W}$-metric.  The result is a building of type $(\t{W},S)$.

\begin{example}\label{ex:fanoplane}
Let $W$ be the dihedral group of order six, then the Coxeter matrix for $W$ is:
$$\left(\begin{array}{ll}
1&3\\
3&1
\end{array}
\right)$$
and let $\bldg$ be a building of type $(W,S)$ with each panel of size three.  Let $\tW$ be the Coxeter group with generating set $S$ and Coxeter matrix:

$$\left(\begin{array}{ll}
1&\infty\\
\infty&1
\end{array}
\right)$$

In Figure \ref{fanoplane} the realization on the left is $\real(\bldg,K')$ where $K'$ is the Davis chamber for the Coxeter system $(\tW,S)$. The resulting graph is the incidence graph of the Fano plane \cite{davisbook}.  The tree on the right is the universal cover of the realization on the left which from Example \ref{tree} we know is the realization of a building of type the infinite dihedral group.
\begin{figure}[htb]
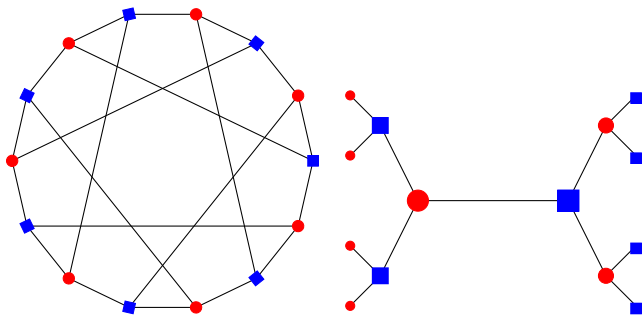

\centering
\fanoplane
\caption{The incidence graph of the Fano plane and its universal cover, the trivalent tree.}
\label{fanoplane}
\end{figure}
\end{example}

\begin{remark}\label{rmk:fundgroup}
Let $\pi$ be the fundamental group of $\real(\bldg,K')$.  Davis showed that there is a free action of $\pi$ on $\tbldg$ and that the quotient set is identified with $\bldg$. It is sometimes helpful to use the identification of $\tbldg$ with $\pi \times \bldg$.  Two chambers $(a,C)$ and $(b,D)$ are $s$-adjacent if $a=b$ and $C$ is $s$-adjacent to $D$.
\end{remark}

\section{Graph Products of Coxeter Groups}\label{sec:GraphProducts}

The classical ways to combine groups are with direct products and free products.  Graph products are an intermediate where we allow some factors to commute, but not others.  The first examples of graph products are right angled Coxeter groups, and right angled Artin groups.  The automorphism groups of each of these have been well studied.  

We we will procede by giving a formal definition of graph products of Coxeter groups.  This definition is equivalent to the quotient of the free product by relations where some sets of generators commute as in the more general case, which we will define in Section~\ref{sec:GenGraphProd}.  However, we find it useful to define graph products in terms of a Coxeter matrix to stay in the context of the construction given in Section \ref{cover}.  

For the rest of this paper let $\graph$ be a graph with vertex set $V(\graph)=I$.  Let $\{(W_i,S_i)\}_{i\in I}$ be a collection of Coxeter systems.  

Let $W$ be the product of the $W_i$, and $S$ the disjoint union of the $S_i$ so that $(W,S)$ is a Coxeter system with Coxeter matrix: 
$$
m(s,t) = \left \{
\begin{array}{ll}
 m_i(s,t) & \textrm{if $i=j$},\\
2 & \textrm{otherwise.}\\
\end{array}\right.$$

Then we apply the construction from Section \ref{cover} to this Coxeter system in the following way.

\begin{definition}\label{graphproductsCoxetergroups}
The \emph{graph product of the $W_i$ over $\graph$} is the Coxeter group with generating set $S=\bigsqcup _{i\in I} S_i$ and Coxeter matrix $\t{m}(s,t)$ with $s\in S_i$ and $t\in S_j$ where 
$$
\t{m}(s,t) = \left \{
\begin{array}{ll}
 m_i(s,t) & \textrm{if $i=j$},\\
2 & \textrm{if $i\neq j$ and $\{i,j\} \in E(\graph)$}\\
\infty & \textrm{otherwise}.
\end{array}\right.$$
We denote this Coxeter group as $\prod_\graph W_i$, or $\tW$ when all data is understood.
\end{definition}
In words, we include all of the relations from each of the $W_i$ and then include the additional relations that the elements of $S_i$ commute with those in $S_j$ exactly when $\{i,j\}$ is an edge in the graph. Our new Coxeter matrix has the matricies $m_i$ along the diagonal and off the diagonal there are blocks of either all $2$ or all the symbol $\infty$.  
 
\begin{example}\label{example:RACG}
The graph product of copies of $\Ztwo$, that is the graph product of Coxeter groups with just one generator, is a right angled Coxeter group.  Further, any right angled Coxeter group can be written as such a graph product.  Then the group $\prod_\graph \Ztwo$ is the right angled Coxeter group associated to $\graph$ as constructed at the end of Section~\ref{CoxeterGroups}.  In this paper we will use nonstandard notation for the sake of simplicity and take the generating set to be exactly the vertex set of $\graph$ rather than a set indexed by the vertices.
\end{example}

\section{Graph Products of Buildings}\label{sec:grprodbldg}

We now apply the construction for buildings from Section \ref{cover} to define the graph product of buildings. This was also due to Davis in \cite{exbldg}. We first give an explicit construction the Davis chamber for the graph product of Coxeter groups.  

Let $K_i$ denote the Davis chamber for each $(W_i,S_i)$ and let the nerve  $L(W_i,S_i)$ be denoted $L_i$ so that $K_i$ is the cone on the barycentric subdivision of $L_i$.  We define $L$ to be the nerve $L(\prod_\graph W_i, \bigsqcup S)$.  Then the $1$-skeleton of $L$ is formed from the disjoint union of the $L_i$ by attaching $1$-cells connecting vertices in $L_i$ with vertices in $L_j$ when $\{i,j\}$ is an edge in $\graph$.  Thus for every edge $\{i,j\}$ in $\graph$ we have that $L$ contains the complete bipartite graph between $L_i$ and $L_j$. Because $L$ is a flag complex it is completely determined by its one skeleton. Recall that the Davis chamber is the cone on the barycentric subdivision of $L$, the nerve of the Coxeter system.  The empty set in each $K_i$ is the cone point of that $K_i$, and in $K'$, the Davis chamber for $(\t{W},S)$, each of these cone points is identified.  

Note that $L$ is a subcomplex of $\prod_I L_i$, which means that $K'$ is contained in the Davis chamber of $(W,S).$  Thus we can also think of the construction as removing faces of $K$ when edges in $\graph$ are missing.

We now introduce new data: for each $i\in I$ let $\bldg_i$ be a building of type $(W_i,S_i)$.  First, $\bldg=\prod_I \bldg_i$ is a building of type $\prod_I W_i$.  Since every $S_i$ commutes with every $S_j$ when $i \neq j$ the $\prod_I W_i$ distance between two chambers is just the product of the distances between corresponding components. Call this product of buildings $\bldg$. We now apply the realization of Section \ref{cover} to $\bldg$.  Let $K'$ be the Davis chamber for $\prod_\graph W_i$ constructed above.  The realization $\real(\bldg,K')$ will not be simply connected (unless $\graph$ is the complete graph on $I$), since $K'$ is not the Davis chamber for $\prod_I W_i$.

Thus we can apply Theorem \ref{Prop:Cover} to graph products and gain the following:

\begin{prop}\label{Prop:Cover}
The universal cover of $\real(\bldg,K')$ is the standard realization of a building of type $\prod_\graph W_i$.
\end{prop}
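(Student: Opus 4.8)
The plan is to obtain this proposition as a direct instance of the covering-space theorem stated in Section~\ref{cover}. Recall that that theorem begins with a building $\bldg$ of type $(W,S)$ having Coxeter matrix $(m(s,t))$, forms a new matrix $(m'(s,t))$ subject to the single requirement that $m'(s,t)=m(s,t)$ whenever $m'(s,t)\neq\infty$ (so the only permitted change is to promote finite entries to $\infty$), and concludes that the universal cover of the nonstandard realization $\real(\bldg,K')$ is the standard realization of a building of type $(W',S)$. To apply this here I would take $W=\prod_I W_i$, the direct product, with $\bldg=\prod_I \bldg_i$ its building (of type $\prod_I W_i$, as already noted in the construction), and I would take $W'=\prod_\graph W_i$.

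The one thing that must be checked is that this pair of Coxeter matrices satisfies the compatibility hypothesis of the theorem. First I would write down the Coxeter matrix $m$ for the direct product $\prod_I W_i$: on each diagonal block indexed by $i$ it equals $m_i$, and every off-diagonal entry (with $s\in S_i$, $t\in S_j$, $i\neq j$) equals $2$, since in a direct product all generators coming from distinct factors commute. Next I would compare this with the matrix $\t{m}$ defining $\prod_\graph W_i$ in Definition~\ref{graphproductsCoxetergroups}: it agrees with $m$ on the diagonal blocks, and on the off-diagonal entries indexed by edges of $\graph$ both matrices equal $2$; it differs from $m$ only by replacing the remaining off-diagonal entries, those indexed by non-edges, with $\infty$. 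Hence $\t{m}$ is obtained from $m$ precisely by promoting certain finite entries to $\infty$, so $\t{m}(s,t)=m(s,t)$ whenever $\t{m}(s,t)\neq\infty$. This is exactly the admissibility hypothesis, with $m'=\t{m}$ and $W'=\prod_\graph W_i$.

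With the hypothesis verified, the theorem of Section~\ref{cover} applies verbatim, and the universal cover of $\real(\bldg,K')$, where $K'$ is the Davis chamber of $(\prod_\graph W_i, S)$, is the standard realization of a building of type $\prod_\graph W_i$. I do not expect any genuine obstacle: the entire content lies in recognizing that the direct product's Coxeter matrix carries a $2$ in every off-diagonal slot, so that deleting the non-edge commuting relations is exactly the admissible move of sending finite entries to $\infty$. The closest thing to a subtlety that I would want to record explicitly is that $\bldg=\prod_I\bldg_i$ really is a building of type $\prod_I W_i$ with the product $W$-distance; this holds because the $W$-distance in a product of buildings is computed coordinatewise, matching the block-diagonal-plus-$2$ structure of the product Coxeter group.
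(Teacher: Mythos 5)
Your proposal is correct and matches the paper's approach exactly: the paper likewise treats this proposition as an immediate application of Davis's covering-space theorem from Section~\ref{cover}, applied to the product building $\prod_I \bldg_i$ of type $\prod_I W_i$ with the graph-product Coxeter matrix playing the role of $m'$. Your explicit check that the off-diagonal entries of the product matrix are all $2$, so that passing to $\prod_\graph W_i$ only promotes finite entries to $\infty$, is the one hypothesis the paper leaves implicit, and you verify it correctly.
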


Thus we define the graph product of buildings using the construction from Section \ref{sec:cover}.

\begin{definition}
The \emph{graph product of the $\bldg_i$ over $\graph$}, denoted $\prod_\graph \bldg_i$, is the set of copies of $K'$ in $\widetilde{\real(\bldg,K')}$, the universal cover of the $K'$ realization of $\bldg$, with two chambers $C$ and $C'$ being $s$-adjacent when their intersection is a copy of $K'_s$.  The $\tW$-distance is defined by the $s$-mirrors that a path between two centers intersects.
\end{definition}

When discussing graph products and all data is understood we will denote $\prod_I W_i$ as $W$ and $\prod_\graph W_i$ as $\tW$.  Also, $\bldg$ will denote $\prod_I \bldg_i$ and $\tbldg$ will denote $\prod_\graph \bldg_i$.

\begin{example}
If each of the $W_i$ are $\Ztwo$ then $\tW$ is a right angled Coxeter group and $\tbldg$ is a right angled building as in Definition \ref{def:RAB}.  Further each $i$-panel has cardinality $|\bldg_i|$ making $\tbldg$ a regular right angled building.  By Remark \ref{RABcard} any regular right angled building can be written as such a graph product.  In this case each of the $\bldg_i$ is a rank one building, which we can think of as just a set as in Example \ref{Rank1}.  
\end{example}

\begin{example}
If $\graph$ is the complete graph on $I$ then $\tbldg$ equals $\bldg$, the product of the $\bldg_i$.
\end{example}

\section{Graph Products of Buildings as Right Angled Buildings}\label{sec:graphprodRAB}

In general, any building can be viewed as a building of type $\Ztwo$ as in Example \ref{Rank1}. The set remains the same and we define the $\Ztwo$-distance between any two distinct chambers to be the non-identity element in the group.  

We will consider each $\bldg_i$ as a building of type $\Ztwo$.  To avoid confusion, we will let $E_i$ denote the set $\bldg_i$ with the $\Ztwo$ metric.  Because these are \emph{the same sets}, just with a different metric, there is a "natural" identification between them.  This identification induces a map between $\prod_I E_i$ and $\bldg$.  Let $E$ denote $\prod_I E_i$ and $K'_E$ denote the Davis chamber for $\prod_\graph \Ztwo$.  We know that $\tE=\prod_\graph E_i$ is a regular right angled building of type $\prod_\graph \Ztwo$.

\begin{theorem}\label{thm:mainresult}
There exists a unique isomorphism $\phi:\tE\to \tbldg$ (up to a choice of base chamber) so that the diagram commutes:\\
\begin{center}
\begin{tikzpicture}[node distance=2cm, auto]
  \node (P) {$\t{E}$};
  \node (B) [right of=P] {$\tbldg$};
  \node (A) [below of=P] {$E$};
  \node (C) [below of=B] {$\bldg$};
  \draw[-> , dashed] (P) to node {$\phi$}(B);
  \draw[->] (P) to node [swap] {$\displaystyle p_E$} (A);
  \draw[<->] (A) to node [swap] {$=$} (C);
  \draw[->] (B) to node {$p$} (C);
  \end{tikzpicture}\end{center}
\end{theorem}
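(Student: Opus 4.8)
The plan is to realize $\phi$ as a lift, through the two universal covers, of a single \emph{coarsening} map of the non‑simply‑connected realizations that is the identity on the common chamber set $E=\bldg$. First I would build a mirror‑compatible map of Davis chambers from the vertex map $\sigma\colon S\to I$ sending $s\in S_i$ to $i$. If a spherical subset $T\subseteq S$ of $\tW$ meets both $S_i$ and $S_j$ for $i\neq j$, then $S_i$ and $S_j$ must commute (otherwise $m(s,t)=\infty$ for $s\in T\cap S_i$, $t\in T\cap S_j$ and $W_T$ is infinite), so $\sigma(T)=\{i:T\cap S_i\neq\emptyset\}$ is a clique of $\graph$, i.e. a spherical subset of $\prod_\graph\Ztwo$. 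Hence $\sigma$ induces a surjective simplicial map of nerves $L(\tW,S)\to L(\prod_\graph\Ztwo,I)$ carrying $\poset(\tW)_{\geq T}$ into $\poset_{\geq\sigma(T)}$; coning off barycentric subdivisions gives a surjection $\kappa\colon K'\to K'_E$ of Davis chambers with $\kappa(K'_s)\subseteq (K'_E)_{\sigma(s)}$ and more generally $\kappa(K'_T)\subseteq (K'_E)_{\sigma(T)}$.

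Next I would check that $\mathrm{id}_\bldg\times\kappa$ descends to a map $f\colon\real(\bldg,K')\to\real(E,K'_E)$ which is the identity on the index set $E=\bldg$. The point is that the gluing relation is preserved: suppose $(C,x)\sim(D,x)$ in $\real(\bldg,K')$, i.e. $\delta_W(C,D)\in W_{S(x)}$, and let $T_0=\mathrm{supp}\,\delta_W(C,D)\subseteq S(x)$. For any $i$ with $\kappa(x)\notin (K'_E)_i$, mirror‑compatibility forces $S(x)\cap S_i=\emptyset$, hence $\delta_i(C_i,D_i)=1$, i.e. $C_i=D_i$; thus the coarsened distance $\delta_{W_E}(C,D)$, which records only which coordinates differ, is supported on $\{i:\kappa(x)\in(K'_E)_i\}$, so $(C,\kappa(x))\sim(D,\kappa(x))$ in $\real(E,K'_E)$. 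Therefore $f$ is well defined and continuous, and since $\kappa$ is surjective it maps the closed chamber of $C$ onto the closed chamber of $C$.

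The heart of the argument—and the step I expect to be the main obstacle—is to show $f$ induces an isomorphism on fundamental groups; I would get this by showing $f$ is a homotopy equivalence. Each realization is assembled from contractible chambers glued along contractible mirrors: $K'_T=|\poset(\tW)_{\geq T}|$ is a cone (its indexing poset has least element $T$), and likewise $(K'_E)_U$ is contractible. The two gluings moreover follow the \emph{same} incidence data over $E=\bldg$, because $C$ and $D$ are identified along the $T_0$‑mirror in $\real(\bldg,K')$ exactly when they are identified along the $\sigma(T_0)$‑mirror in $\real(E,K'_E)$, and $\sigma(T_0)=\{i:C_i\neq D_i\}$. Since $\kappa$ is a homotopy equivalence (contractible to contractible) on every chamber and every mirror, and the mirrors are subcomplexes so the gluings are cofibrations, $f$ is a homotopy equivalence; in particular $f_*\colon\pi:=\pi_1(\real(\bldg,K'))\xrightarrow{\ \sim\ }\pi_1(\real(E,K'_E))=:\pi_E$. (Only this $\pi_1$‑isomorphism is needed below, so one may instead verify directly that the two van Kampen presentations coincide.)

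Finally I would promote $f$ to the chamber bijection. As $\widetilde{\real(\bldg,K')}$ is simply connected, $f$ composed with the covering projection lifts to a map $\tilde f$ into $\widetilde{\real(E,K'_E)}$ covering $f$, unique once the image of a base chamber is fixed, and $\tilde f$ is equivariant along $f_*$. On chambers this yields $\bar f\colon\tbldg\to\tE$ with $p_E\circ\bar f=p$ (using $E=\bldg$). For each $C$, the restriction of $\tilde f$ is an $f_*$‑equivariant map from the $\pi$‑torsor $p^{-1}(C)$ to the $\pi_E$‑torsor $p_E^{-1}(C)$, hence a bijection, so $\bar f$ is a bijection; set $\phi=\bar f^{-1}$. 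Uniqueness up to a choice of base chamber is precisely the uniqueness of the lift $\tilde f$, two choices differing by a deck transformation. That $\phi$ is an isomorphism of right angled buildings—carrying each $i$‑panel of $\tE$ onto an $S_i$‑residue of $\tbldg$ and respecting the $\prod_\graph\Ztwo$‑distance—follows from the matching of gluing patterns via $\sigma$ together with the product description of Remark~\ref{rmk:fundgroup}: $S_i$‑adjacencies fix the $\pi$‑coordinate and sweep out exactly a copy of $\bldg_i$, matching the $i$‑panels of $\tE$. Alternatively, since each $S_i$‑residue of $\tbldg$ is a building of type $(W_i,S_i)$ with $|\bldg_i|$ chambers, $\tbldg$ with these residues as panels is a regular right angled building of the same type and panel cardinalities as $\tE$, and Remark~\ref{RABcard} gives the isomorphism.
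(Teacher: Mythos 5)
Your overall route coincides with the paper's (the poset map $T\mapsto\sigma(T)$ on spherical subsets, the induced map of Davis chambers, descent to a map $f$ of the non--simply-connected realizations, identification of fundamental groups, then passage to universal covers and a base-chamber/deck-transformation uniqueness argument), but the justification you give for the step you yourself flag as the heart of the argument contains a false claim, and the argument breaks exactly there. You assert that $C$ and $D$ are identified along the $T_0$-mirror of $\real(\bldg,K')$, where $T_0=\mathrm{supp}\,\delta_W(C,D)$, \emph{exactly when} they are identified along the $\sigma(T_0)$-mirror of $\real(E,K'_E)$. Only the forward implication holds; the converse fails whenever some $W_i$ is infinite (a case the theorem must cover, e.g.\ trees, and the case used later for hyperbolic buildings). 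Concretely, let $W_i=\langle s,t\rangle$ be the infinite dihedral group and let $C,D$ differ only in the $i$-th coordinate with $\delta_i(C_i,D_i)=st$. Then $T_0=\{s,t\}$ is not spherical in $\tW$, so $K'_{T_0}=|\poset_{\geq T_0}|=\emptyset$ and the closed chambers of $C$ and $D$ in $\real(\bldg,K')$ are disjoint; yet $\sigma(T_0)=\{i\}$ is a clique, so in $\real(E,K'_E)$ the closed chambers of $C$ and $D$ share the $i$-mirror. Hence the two closed-chamber covers have genuinely different nerves, and the cofibration/gluing-lemma argument cannot be run objectwise as you propose: contractibility of chambers and mirrors alone does not give a homotopy equivalence when the gluing combinatorics disagree.

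What repairs this (and what the paper's admittedly terse proof is implicitly using) is a comparison at the level of \emph{residues}, not chambers: for a clique $J$ of $\graph$, the part of $\real(\bldg,K')$ lying over the realization of a $J$-residue of $E$ is the realization of the corresponding $T_J$-residue of $\bldg$, which is homotopy equivalent to the standard realization of $\prod_{i\in J}\bldg_i$ and therefore contractible by Davis's theorem \cite{davisbldgcat} --- this is the essential extra input beyond contractibility of the $K'_T$ --- while the corresponding piece downstairs is a cone on the $J$-residue, also contractible. The residue-level incidence data of the two spaces \emph{do} agree, and a nerve-type argument then yields $\pi_1(\real(\bldg,K'))\cong\pi_1(\real(E,K'_E))$; after that, your lifting/torsor construction of $\phi$ and the uniqueness-up-to-deck-transformation statement go through and agree with the paper's use of the $\pi\times\bldg$ identification of Remark~\ref{rmk:fundgroup}. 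Note also that your fallback via Remark~\ref{RABcard} does not fill the gap: it produces some abstract isomorphism $\tE\to\tbldg$ only after one already knows that $\tbldg$ carries a regular right-angled structure whose $i$-panels are the $S_i$-residues (which is essentially what is being proved), and it does not by itself make the square involving $p$ and $p_E$ commute.
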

\proof
Let $\poset_E=\poset(\prod_\graph \Ztwo, I)$ and $\poset =\poset(W,S)$. We construct a map $\bar{f}:\poset\to\poset_E$ in
the following way.  For any $T\subset S_i$ we let $\bar{f}(T)=\{i\}$.  If $T$ is a spherical subset which intersects more than one $S_i$ define $J=\{j|T\cap S_j\neq \emptyset\}$.  Because $T$ is spherical the elements of $J$ must span a complete subgraph in $\graph$.  Thus $J\in \poset_E$ and we let $\bar{f}(T)=J$.

Let $K'_E$ be the Davis chamber of $(\prod_\graph \Ztwo, I)$ and $K'$ the Davis chamber of $(W,S)$. The map $\bar{f}$ induces a map $f:K' \to K'_E$ by identifying the cone points and extending. Note that $K'_E \hookrightarrow K'$ where the $i$-mirror in $K'_E$ maps to the cone point of the spherical subset $S_i$.  These maps are a homotopy equivalence between $K'$ and $K'_E$.  We can extend this homotopy equivalence to $\real(K',\bldg)$ and $\real(K'_E,E)$ by applying the maps to the realization of every chamber.  The definition of the identifications guarantees that the extension of the homotopy is well defined and continuous. Thus $\real(K',\bldg)$ and $\real(K'_E,E)$ have the same fundamental group, $\pi$. Hence by Remark \ref{rmk:fundgroup} we have that $\tE$ is identified with $\pi \times E$ and $\tbldg$ is identified with $\pi \times \bldg$, but $\bldg = E$ so that $\tbldg$ and $\tE$ are isomorphic as sets. Thus we define our isomorphism as the identity on these identifications. If we fix an identification of $\tE$ with $\pi \times E$ and choose a different isomorphism $\phi'$ then we have a new identification of $\tbldg$ with $\pi \times \bldg$ which is equivalent to a different choice of base chamber.\endproof

\begin{example}
In Figure \ref{hextoRAB} we see the homotopy between the hexagon and the star on six vertices.  All of the mirrors on the left will map to the central vertex on the right. This central vertex corresponds to the $i$-mirror in the standard realization of $E_i$.
\begin{figure}[htb]
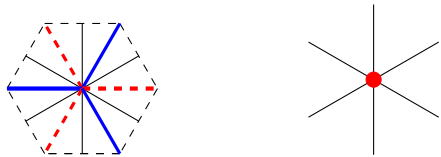

\centering
\hextoRAB
\caption{The standard realization of the Coxeter complex of the dihedral group of order six (left) and the standard realization of that set as a building of type $\Ztwo$ (right.)}
\label{hextoRAB}
\end{figure}
\end{example}
\begin{remark}\label{bldgEadj}
Note that two chambers in $\tE$ are $i$-adjacent exactly when they lie in the same $S_i$ residue in $\tbldg$. If we let $p:\tbldg \to \bldg$ be the map induced by the covering map from $\widetilde{\real(\bldg,K')}\to\real(\bldg,K')$ we see that $S_i$ residues in $\tbldg$ are isomorphic to $S_i$ residues in $\bldg$ under $p$.  This map extends to an isomorphism of $S_i \cup S_j$ residues exactly when $\{i,j\}$ is an edge in $\graph$ and so on.
\end{remark}

We will denote $S_i$ residues in $\tbldg$ by $\panel_i$.  This is a slight abuse of notation since $\panel_i$ also denotes an $i$-panel in $\tE$, however such residues are identified in the map from Theorem \ref{thm:mainresult}, so that it will not cause ambiguity in practice.  Further, for any $J\subset I$ we will say that any residue of type $T_J=\bigcup_{i\in J} S_i$ is a $J$-residue.

In \cite{CapraceSimple} Caprace proved the following lemma:

\begin{lemma}
Parallelism of residues in a building is an equivalence relation if and only if the building is right angled.
\end{lemma}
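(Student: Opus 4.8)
The relation is evidently reflexive and symmetric, so the whole content is transitivity, and I would isolate it through a single structural criterion for parallelism that reduces everything to $(W,S)$. Using that each projection $\proj_{\panel}$ is a gate (for $x\in\bldg$ and $D$ in a residue $\panel$ one has $\ell(\delta(x,D))=\ell(\delta(x,\proj_{\panel}x))+\ell(\delta(\proj_{\panel}x,D))$), one shows that for residues $\panel_1$ of type $J$ and $\panel_2$ of type $K$ the set $\{\delta(C,C'):C\in\panel_1,\ C'\in\panel_2\}$ is a single double coset $W_J\,w\,W_K$, and that $\proj_{\panel_2}(\panel_1)$ is the sub-residue of $\panel_2$ of type $K\cap w^{-1}Jw$. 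Hence $\panel_1\parallel\panel_2$ if and only if $w^{-1}Jw=K$, and in that case $\proj_{\panel_1}|_{\panel_2}$ and $\proj_{\panel_2}|_{\panel_1}$ are mutually inverse isomorphisms. In particular parallel residues have equal rank and conjugate types, with relative position recorded by the minimal double-coset representative $w$.

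For the forward implication (right-angled $\Rightarrow$ transitive) I would first record that in a right-angled group conjugate standard parabolics coincide: if $w^{-1}W_Jw=W_K$, comparing the canonical generating sets of these reflection subgroups shows each $w^{-1}sw$ ($s\in J$) lies in $K\subseteq S$, so $s$ is conjugate to a generator; since a right-angled diagram has no odd bonds, distinct generators are never conjugate, forcing $w^{-1}sw=s$ and $J=K$. Thus parallel residues share a type $J$. I would then invoke the right-angled normalizer formula $N_W(W_J)=W_J\times W_{J^\perp}$, where $J^\perp=\{s\in S\setminus J:\ st=ts\ \text{for all}\ t\in J\}$. Combined with the criterion, the relative position of two parallel $J$-residues is forced to lie in $W_{J^\perp}$, and since $J^\perp$ centralizes $J$ no cancellation occurs, so this element is already the minimal double-coset representative. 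Transitivity follows: if $\panel_1,\panel_2$ have relative position $w_{12}\in W_{J^\perp}$ and $\panel_2,\panel_3$ have $w_{23}\in W_{J^\perp}$, a minimal gallery from $\panel_1$ to $\panel_3$ may be routed through $\panel_2$, so their relative position is $w_{12}w_{23}\in W_{J^\perp}$, which normalizes $W_J$; by the criterion $\panel_1\parallel\panel_3$.

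For the converse (not right-angled $\Rightarrow$ not transitive), non-right-angledness yields $s,t\in S$ with $3\le m(s,t)=:m<\infty$, hence a spherical $\{s,t\}$-residue which is a generalized $m$-gon. Here the criterion produces \emph{cross-type} parallel panels: taking the element $w\in W_{\{s,t\}}$ with $w^{-1}sw=t$ (available precisely because $s,t$ are conjugate in the dihedral group $W_{\{s,t\}}$), an $s$-panel is parallel to a $t$-panel inside the polygon. The plan is to exploit this by placing a single panel in two distinct non-right-angled rank-$2$ residues and chaining the two cross-type parallelisms, obtaining $\panel_1\parallel\panel_2\parallel\panel_3$ whose outer types are conjugated by $w_{12}w_{23}$, while the genuine minimal representative of the relative position of $\panel_1$ and $\panel_3$ fails to conjugate their types, so $\panel_1\not\parallel\panel_3$.

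The hard part will be exactly this converse: certifying non-parallelism of the two outer residues. In contrast with the right-angled situation, the minimal double-coset representative between $\panel_1$ and $\panel_3$ need not equal $w_{12}w_{23}$, and ruling out that \emph{any} representative conjugates the type of $\panel_1$ onto that of $\panel_3$ requires a reduced-word/length analysis in the smallest parabolic containing the chain. I would organize this by passing to that finite-rank parabolic and computing the relevant projections explicitly, using that the two coupling bonds obstruct the type-rotation from being realized by a genuine relative position.
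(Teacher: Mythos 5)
The paper itself contains no proof of this lemma: it is quoted from Caprace's paper \cite{CapraceSimple}, where the right-angled direction is proved via the characterization that two residues are parallel if and only if they have the same type $J$ and lie in a common residue of type $J\cup J^\perp$. Your forward direction assembles the correct ingredients (the double-coset criterion, non-conjugacy of distinct generators in a right-angled Coxeter group, the normalizer formula $N_W(W_J)=W_J\times W_{J^\perp}$), but the one step that actually delivers transitivity is false as stated. It is not true that the relative position of $\panel_1$ and $\panel_3$ equals $w_{12}w_{23}$, nor that a minimal gallery between them can be routed through $\panel_2$: take $S=\{s,t\}$ with $m(s,t)=2$ and the building $E_s\times E_t$ with $|E_t|\geq 3$, and let $\panel_k=E_s\times\{y_k\}$ for three distinct $y_k$; then all pairwise relative positions equal $t$, whereas $w_{12}w_{23}=t\cdot t=1$, and the one-step minimal galleries from $\panel_1$ to $\panel_3$ never meet $\panel_2$. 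The conclusion you need (that the relative position of $\panel_1,\panel_3$ lies in $W_{J^\perp}$) is true but must be obtained differently: $\panel_1$ lies in the $(J\cup J^\perp)$-residue containing $\panel_2$, as does $\panel_3$, and since residues of the same type sharing a chamber coincide, $\delta(\panel_1,\panel_3)\subseteq W_{J\cup J^\perp}$, whence its minimal double-coset representative lies in $W_{J^\perp}$ and normalizes $W_J$. That is exactly Caprace's argument; without it your forward direction has a genuine hole.

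The converse has two deeper problems beyond the fact that you left its key step as a plan. First, your cross-type parallelism needs $w\in W_{\{s,t\}}$ with $w^{-1}sw=t$, and such $w$ exists if and only if $m(s,t)$ is odd; for even $m(s,t)\geq 4$ the generators $s,t$ are not conjugate in the dihedral group, opposite panels in the corresponding generalized polygon have \emph{equal} type, and your construction does not start. Second, and fatally for the proposed reduced-word analysis, the converse is simply false without a thickness hypothesis: in the thin building of type the dihedral group of order six (the hexagonal Coxeter complex), each panel is parallel to itself and to its unique opposite panel and to nothing else, residues of different ranks are never parallel, and so parallelism \emph{is} an equivalence relation although the building is not right angled. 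Thickness is precisely what your outline is missing: in a thick generalized $m$-gon a panel $\panel_2$ admits two distinct opposite (hence parallel) panels $\panel_1\neq\panel_3$ at distance $2$ from each other in the incidence graph, and then $\proj_{\panel_3}(\panel_1)$ is a single chamber, so $\panel_1$ and $\panel_3$ are not parallel and transitivity fails. This also shows the lemma as quoted needs a thickness assumption for its ``only if'' half; note that the half the paper actually uses later (right angled $\Rightarrow$ equivalence relation) is the one your argument, once repaired as above, does establish.
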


Applying this lemma to $\tE$ we have the following result:

\begin{lemma}
Paralellism of $J$-residues in $\tbldg$ is an equivalence relation.
\end{lemma}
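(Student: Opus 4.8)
The plan is to transport Caprace's lemma from $\tE$ to $\tbldg$ via the isomorphism $\phi$ of Theorem~\ref{thm:mainresult}, using the dictionary established in Remark~\ref{bldgEadj} between $S_i$-residues in $\tbldg$ and $i$-panels in $\tE$. The key observation is that $\tE$ is a \emph{right angled} building, being of type $\prod_\graph \Ztwo$ (as established in Section~\ref{sec:grprodbldg}), so Caprace's lemma applies directly: parallelism of residues in $\tE$ is an equivalence relation. The task is therefore to show that parallelism of $J$-residues in $\tbldg$ corresponds, under $\phi$, to parallelism of $J$-residues in $\tE$, so that the equivalence-relation property descends.

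\emph{First} I would recall the notion of parallelism. Two residues $\panel$ and $\panel'$ are \emph{parallel} if the projection maps (Proposition~\ref{bldg1}) restrict to mutually inverse bijections $\proj_{\panel}:\panel'\to\panel$ and $\proj_{\panel'}:\panel\to\panel'$. The crucial point is that parallelism is defined entirely in terms of the projection operators, which are in turn determined by $\delta$-distances. \emph{Next} I would use Theorem~\ref{thm:mainresult}: the isomorphism $\phi:\tE\to\tbldg$ is an isomorphism of buildings (in particular it commutes with the distance functions in the appropriate sense), so it carries residues to residues and intertwines the projection maps. Concretely, by Remark~\ref{bldgEadj}, an $i$-panel of $\tE$ maps under $\phi$ to an $S_i$-residue of $\tbldg$, and more generally a $J$-residue of $\tE$ (type $\bigcup_{i\in J} S_i$ in $\tbldg$'s indexing, but type $J$ in $\tE$'s indexing) maps to a $J$-residue of $\tbldg$, matching the notational convention set just before the statement.

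\emph{Then} the argument closes formally: $\phi$ being a building isomorphism sending $J$-residues to $J$-residues bijectively and preserving projections, two $J$-residues of $\tbldg$ are parallel if and only if their $\phi$-preimages in $\tE$ are parallel. Since parallelism is an equivalence relation on the $J$-residues of $\tE$ by Caprace's lemma (as $\tE$ is right angled), the relation it induces on $J$-residues of $\tbldg$ via the bijection $\phi$ is automatically reflexive, symmetric, and transitive. One small check is that $J$-residues in $\tbldg$, while not themselves residues of the right angled building in the naive rank-one sense, are exactly the images of genuine $J$-residues of $\tE$ under $\phi$; this is precisely the content of Remark~\ref{bldgEadj} extended inductively over the faces of the complete subgraph spanned by $J$.

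\emph{The main obstacle} I anticipate is verifying carefully that $\phi$ genuinely intertwines the projection operators on the two buildings, since $\tE$ and $\tbldg$ are buildings of the \emph{same} type $\prod_\graph \Ztwo$ versus $\tW$ with different Weyl groups, and parallelism is sensitive to the metric structure. The resolution is that Theorem~\ref{thm:mainresult} provides an isomorphism of the underlying chamber systems respecting adjacencies (via the identification $\tbldg\cong\pi\times\bldg\cong\pi\times E\cong\tE$), and parallelism of $J$-residues can be characterized purely combinatorially in terms of which chambers are connected by galleries of type lying in $T_J$ together with the gate/projection property—data that $\phi$ preserves by construction. Once this intertwining is pinned down, the descent of the equivalence-relation property is immediate.
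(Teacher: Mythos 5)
Your proposal is correct and follows essentially the same route as the paper: the paper derives this lemma immediately by applying Caprace's result to the right angled building $\tE$ and transferring it to $\tbldg$ through the identification of Theorem~\ref{thm:mainresult} and Remark~\ref{bldgEadj}, exactly as you do. The only difference is that you make explicit the intertwining of projections/gates under $\phi$ (which the paper leaves implicit), and that verification is sound.
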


\section{Hyperbolic Buildings}

Gromov had a notion of hyperbolicity which loosely meant that any triangle in the geodesic metric space $X$ is thin.  Such a space is said to be \emph{Gromov hyperbolic} \cite{bridson}.

Let $(W,S)$ be a Coxeter system.  Let $\bldg$ be the thin building of type $(W,S)$ as in Example~\ref{thin}, and $K$ be the cone on $S$ with mirrors $K_s=s$ for each $s$ in $S$.  The \emph{Cayley graph} of a Coxeter group $W$ with generating set $S$ is $\real(\bldg,K)$.  One can check easily that this is a rephrasing of the standard definition of the Cayley graph in building terminology.  We say that a group $G$ is \emph{hyperbolic} or \emph{word hyperbolic} if its Cayley graph is Gromov hyperbolic.

In \cite{MoussongThesis} Moussong proved the following result classifying word hyperbolic Coxeter groups:

\begin{theorem}\label{MoussongTheorem}
Let $(W,S)$ be a Coxeter system.  The following are equivalent:
\begin{enumerate}
\item $W$ is word hyperbolic.
\item $W$ has no subgroup isomorphic to $\mathbb{Z}\times \mathbb{Z}.$
\item $W$ does not contain a Euclidean sub-Coxeter system of rank greater than two and does not contain a pair of disjoint commuting Coxeter systems whose groups are both infinite.
\end{enumerate}
\end{theorem}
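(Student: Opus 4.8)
The plan is to prove the three conditions equivalent via the cycle $(1)\Rightarrow(2)\Rightarrow(3)\Rightarrow(1)$, where only the last implication carries real content. For $(1)\Rightarrow(2)$, I would invoke the standard fact that a word hyperbolic group admits no subgroup isomorphic to $\mathbb{Z}\times\mathbb{Z}$: such a subgroup is undistorted and would embed a flat plane quasi-isometrically, contradicting the uniform thinness of triangles. For $(2)\Rightarrow(3)$ I would argue the contrapositive. If $W$ contains a Euclidean sub-Coxeter system $(W_T,T)$ of rank $\geq 3$, then $W_T$ acts properly cocompactly on a Euclidean space of dimension $|T|-1\geq 2$, and its translation subgroup contains $\mathbb{Z}^2$; if instead $W$ contains two disjoint commuting infinite sub-Coxeter systems, each contributes an element of infinite order, and since they commute elementwise these generate $\mathbb{Z}\times\mathbb{Z}$. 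Either failure of $(3)$ thus produces a $\mathbb{Z}\times\mathbb{Z}$, so $(2)$ fails.

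The substance is $(3)\Rightarrow(1)$. Here I would work with the Davis complex $\Sigma=\real(\bldg,K)$, the standard realization of the thin building $\bldg$ of type $(W,S)$ from Example~\ref{thin}, equipped with \emph{Moussong's piecewise Euclidean metric}: each spherical residue of type $T$ is realized as a Coxeter cell, the convex hull of a generic $W_T$-orbit in the reflection representation of the finite group $W_T$. Since $W$ acts properly and cocompactly by isometries on $\Sigma$, it is quasi-isometric to $\Sigma$, so $W$ is word hyperbolic exactly when $\Sigma$ is Gromov hyperbolic. Once $\Sigma$ is known to be CAT(0), the flat plane theorem for cocompact CAT(0) spaces (see \cite{bridson}) reduces Gromov hyperbolicity to the absence of an isometrically embedded flat plane $\mathbb{E}^2$. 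The argument therefore splits into establishing that the Moussong metric is CAT(0), and then showing that any flat plane forces exactly the configurations excluded by $(3)$.

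The CAT(0) claim reduces, by Gromov's link condition, to verifying that the link of each vertex of $\Sigma$ is CAT(1). Up to the $W$-action there is a single vertex type, whose link is the metric simplicial complex built on the nerve $L(W,S)$ in which the simplex spanned by a spherical subset $T$ carries the spherical metric with edge lengths $\pi-\pi/m(s,t)$. The crux is \emph{Moussong's lemma}: a purely combinatorial criterion guaranteeing that this metric nerve is CAT(1), namely that every closed geodesic built from these spherical simplices has length at least $2\pi$. I expect this lemma to be the main obstacle: the delicate point is bounding the lengths of closed geodesics passing through several maximal simplices, which demands a careful induction on the combinatorics of the nerve combined with spherical trigonometry, and it is precisely the hypotheses on the $m(s,t)$ (encoded in $(3)$) that force the short-loop estimate.

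Granting CAT(0), the final step analyzes flats. A flat plane is a maximal Euclidean subspace of $\Sigma$, and its interaction with the cell structure singles out a sub-Coxeter system acting cocompactly on it. By the decomposition of Coxeter groups into spherical, Euclidean, and hyperbolic factors recalled in Section~\ref{CoxeterGroups}, a cocompact $\mathbb{E}^2$-action is supported either on an irreducible Euclidean reflection subgroup of rank $\geq 3$ (a Euclidean sub-Coxeter system of rank greater than two) or on a reducible product of two infinite factors (a pair of disjoint commuting infinite sub-Coxeter systems). Hence a flat exists precisely when $(3)$ fails, closing the cycle. The translation of a geometric flat back into this combinatorial sub-Coxeter data requires care, but it is comparatively routine once the CAT(0) structure and the geometry of the Coxeter cells are in hand; essentially all the difficulty is concentrated in Moussong's lemma.
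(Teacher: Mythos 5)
The first thing to note is that the paper does not prove this statement at all: it is quoted from Moussong's thesis (the text introduces it with ``In \cite{MoussongThesis} Moussong proved the following result''), so there is no internal argument to compare against; the relevant benchmark is Moussong's original proof, and your outline is a faithful reconstruction of exactly that route --- Davis complex with the Moussong piecewise Euclidean metric, CAT(0) via Gromov's link condition and Moussong's lemma, then the flat plane theorem. Your easy implications are essentially fine: $(1)\Rightarrow(2)$ is the standard fact that hyperbolic groups contain no $\mathbb{Z}\times\mathbb{Z}$, and in $(2)\Rightarrow(3)$ both cases work, though for the commuting case you should justify that the two infinite-order elements $u\in W_A$, $v\in W_B$ actually generate $\mathbb{Z}\times\mathbb{Z}$; this follows from $\langle u\rangle\cap\langle v\rangle\subset W_A\cap W_B=W_{A\cap B}=\{1\}$, the standard intersection property of standard parabolic subgroups, which deserves an explicit citation.

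As a standalone proof, however, the proposal is an architecture with both load-bearing walls missing, and you partly misjudge where the weight sits. First, Moussong's lemma (the $2\pi$ lower bound on closed geodesics in the metric nerve, giving CAT(1) links and hence that the Moussong metric is CAT(0)) is correctly identified as the main obstacle, but nothing is offered toward it; in \cite{MoussongThesis} this is a long induction with genuine spherical trigonometry and cannot be waved through. Second, the step you call ``comparatively routine'' --- converting a geometric flat plane into the combinatorial data excluded by $(3)$ --- is not routine: a flat in a piecewise Euclidean CAT(0) complex need not be a subcomplex and need not be stabilized by any convenient subgroup, so extracting from it either a Euclidean sub-Coxeter system of rank at least three or a pair of disjoint commuting infinite sub-Coxeter systems requires its own analysis (Moussong does this by determining exactly which configurations in the nerve support closed geodesics of length exactly $2\pi$, i.e.\ flat directions). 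Also note the flat plane theorem from \cite{bridson} needs the CAT(0) space to be proper and the action cocompact; that holds here but should be said. In short: correct skeleton, same route as the original source, but both substantive ingredients are named rather than proved --- which is precisely why the paper under review simply cites \cite{MoussongThesis} instead of reproving the theorem.
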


Later Moussong observed that a building has negative curvature exactly when its Coxeter group is word hyperbolic and this was written down by Davis in \cite{davisbldgcat}.  The notion of hyperbolicity has offered a rich class of examples in geometric group theory.  
 
Using these results and the isomorphism in Theorem \ref{thm:mainresult} we can completely categorize the hyperbolic buildings arising from graph products as a generalization of Theorem \ref{MoussongTheorem}.  For this we first need the graph $\graph$ to satisfy the \emph{no squares condition}, that is that every square in $\graph$ must have a diagonal inside.

\begin{theorem}
The standard realization of the graph product of buildings, $\prod_\graph \bldg_i$, is hyperbolic if and only if the following conditions hold:
\begin{enumerate}
\item \label{cond1} each $\bldg_i$ is spherical or infinite hyperbolic, 
\item \label{cond2} $\graph$ satisfies the no squares condition
\item \label{cond3} for all pairs of infinite hyperbolic buildings $\bldg_i$ and $\bldg_j$ we have that $\{i, j\}$ is not an edge in $\graph$ and
\item \label{cond4} if $\bldg_i$ is infinite hyperbolic then the star of $i$ spans a complete subgraph in $\graph$.
\end{enumerate}
\end{theorem}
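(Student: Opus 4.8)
The plan is to reduce the statement to a purely Coxeter-theoretic question about the Weyl group $\tW=\prod_\graph W_i$ and then read the four conditions off Moussong's criterion. First I would invoke the observation of Moussong and Davis recorded just above the statement, that the standard realization of a building is Gromov hyperbolic if and only if its Coxeter group is word hyperbolic; applied to $\tbldg=\prod_\graph\bldg_i$, whose type is $\tW$, this says that $\prod_\graph\bldg_i$ is hyperbolic iff $\tW$ is word hyperbolic. (Theorem~\ref{thm:mainresult} is what guarantees the relevant realization is the right angled building, but the criterion itself only uses the type $\tW$.) It then suffices to characterize word hyperbolicity of the graph product Coxeter group $\tW$, and by Theorem~\ref{MoussongTheorem}(3) this holds iff (A) $\tW$ contains no irreducible Euclidean sub-Coxeter system of rank $\geq 3$, and (B) $\tW$ contains no pair of disjoint commuting special subgroups that are both infinite.

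The key combinatorial input is a finiteness lemma for special subgroups of a graph product, which I would prove separately: if $T\subseteq S$ has support $J=\{i : T\cap S_i\neq\emptyset\}$, then $W_T$ is the graph product over the induced subgraph $\graph[J]$ of the subgroups $\langle T\cap S_i\rangle\leq W_i$, and hence $W_T$ is infinite iff some $\langle T\cap S_i\rangle$ is infinite or $\graph[J]$ is not complete (two non-adjacent factors in the support give a free product of nontrivial factors, which is infinite). I would also note that a cross-factor relation carries label $2$ or $\infty$ only; since an affine Coxeter diagram is connected with all labels finite, an irreducible affine subsystem of $\tW$ can use no cross-factor edge and so lies inside a single $S_i$. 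Thus (A) is equivalent to ``no $W_i$ contains an irreducible affine rank-$\geq 3$ subsystem,'' which together with the within-factor instance of (B) is exactly condition~(1): each $W_i$ is word hyperbolic, i.e. each $\bldg_i$ is spherical or infinite hyperbolic.

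For the main equivalence I would prove each of (2)--(4) necessary by exhibiting, when it fails, an explicit disjoint commuting infinite pair: a diagonal-free square $i\!-\!j\!-\!k\!-\!l\!-\!i$ gives the pair $(\{s_i,s_k\},\{s_j,s_l\})$ of infinite dihedral groups (failure of (2)); two adjacent infinite-hyperbolic factors give $(S_i,S_j)$ (failure of (3)); and an infinite-hyperbolic $\bldg_i$ with two non-adjacent neighbors $j,k$ gives $(S_i,\{s_j,s_k\})$ (failure of (4)). For sufficiency I would assume (1)--(4), suppose a disjoint commuting infinite pair $(T_1,T_2)$ existed with supports $J_1,J_2$, and note that commuting forces $\{i,j\}\in E(\graph)$ whenever $i\in J_1$, $j\in J_2$, $i\neq j$. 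By the finiteness lemma each $T_a$ is infinite either through a factor ($\bldg_i$ infinite hyperbolic for some $i\in J_a$) or through a non-adjacent pair in its own support, and a four-way case analysis on these two sources returns a contradiction with one of (1)--(4) in every case: factor/factor yields a within-factor commuting infinite pair or two adjacent infinite factors; factor/non-adjacent yields either a direct clash with the commuting constraint or an infinite factor with non-adjacent neighbors; and non-adjacent/non-adjacent yields, after the overlapping subcases are excluded, four distinct indices inducing a diagonal-free square.

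The main obstacle I anticipate is the bookkeeping in this last case analysis, specifically the possibility that $T_1$ and $T_2$ share a factor ($J_1\cap J_2\neq\emptyset$): there one must check that the commuting constraint, applied to the shared index against the other set's witnesses, either forbids the overlap outright or collapses it into the within-factor situation governed by condition~(1). I expect that identifying precisely which graph-and-building configuration each Moussong obstruction corresponds to—rather than any delicate geometry—is where the care is required, with the finiteness lemma and the ``affine diagrams have no $\infty$-edges'' remark doing the structural work.
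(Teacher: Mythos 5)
Your proposal is correct and follows essentially the same route as the paper: both reduce hyperbolicity of the standard realization of $\prod_\graph \bldg_i$ to word hyperbolicity of $\tW$ via the Moussong--Davis observation, and then match conditions (1)--(4) against Moussong's criterion (no Euclidean subsystem of rank $\geq 3$, no pair of disjoint commuting infinite special subgroups). The only difference is one of detail: the paper declares necessity ``immediate'' and disposes of the commuting-pair analysis in two sentences, whereas you supply the finiteness lemma for special subgroups of a graph product, the observation that irreducible affine subsystems must lie in a single factor, and the explicit case analysis (including the overlapping-support cases) that make those claims rigorous.
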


\begin{proof}
We need only show that if each of these is satisfied then $\tW$ is hyperbolic.  Seeing that each of these is necessary is immediate.  By Moussong's Theorem we see that we need only guarantee that $\tW$ does not contain a Euclidean sub-Coxeter system of rank greater than two, guaranteed by conditions \ref{cond1} and \ref{cond3} and that $W$ does not contain a pair of disjoint commuting Coxeter systems whose groups are both infinite.  In order for a sub-Coxeter system to be infinite in $\tW$ it could contain an infinite hyperbolic $W_i$ or be the free product of two finite groups $W_i$ and $W_j$.  Conditions \ref{cond2}, \ref{cond3}, and \ref{cond4} verify that no such direct product of two of these exist.
\end{proof}
\section{Structure Preserving Automorphisms}\label{spaut}
The standard realization of a building $\bldg$ is locally finite if and only if each panel in $\bldg$ is finite. For this section we will assume that each panel in $\bldg$ is finite.  An automorphism $g$ of $\bldg$ is said to be \emph{type preserving} if for every $s$ in $S$ we have that $g$ sends $s$-panels to other $s$-panels.  If $g$ is an automorphism of a realization of a building then $g$ need not be type preserving, however we will only be considering actions on realizations of buildings that are type preserving, so in this paper an \emph{automorphism of a building $\bldg$} will always be a type preserving automorphism.  The group of all automorphisms of $\bldg$ is denoted $\aut(\bldg)$.  We will refer to an automorphism of a building and its induced map on the realization of the building interchangeably. 

Given the natural identification between graph products of buildings and right angled buildings, it makes sense to consider when an action on $\tE$ extends to an action on $\tbldg$ and vice versa.  Certainly the full automorphism group of $\tE$ will not act (as building automorphisms) on $\tbldg$ unless $\tbldg$ happens to be a graph product of rank-$1$ buildings.  We will show, however, that any building automorphism of $\tbldg$ is a building automorphism of $\tE$.  We will give a complete characterization of $\aut(\tbldg)$ as a subgroup of $\aut(\tE)$ and we will give a construction for the stabilizer of a chamber in each of these groups.

First, let $p:\tE\to E$ be the map induced by the covering map from $\widetilde{\real(E,K)}\to\real(E,K)$.  Further, let $p_i$ be $p$ composed with the projection onto $E_i$ in the product.  Then $p_i$ is a building isomorphism between any $S_i$ residue in $\tE$ and $E_i$.

Suppose that we are given as data a collection of groups $\{G_i\}_I$ with $G_i$ acting on $E_i$ for each $i\in I.$  We let $\prod_I G_i$ act on $E$ component wise.  Then we define our desired automorphisms of $\tE$ as follows:

\begin{definition}
We say that an automorphism $g$ of $\tE$ is \emph{structure preserving} if for each $i$-panel $\panel_i$ there exists a $g_i\in G_i$ such that $p_i\circ g(C) = g_i \circ p_i (C)$ for all $C$ in $\panel_i$.  We will denote the group of all structure preserving automorphisms of $\tE$ by $\spaut(\tE)$.
\end{definition}

We do not need to require the $E_i$ to be rank $1$ buildings, or $\tE$ to be right angled, however in practice we will only use this definition in the right angled case.  

\section{Stabilizer of a Chamber}
We next establish language to describe the full stabilizer of a chamber $C_0$ in $\spaut(\tE)$.  This construction is a generalization of the construction of the stabilizer of an edge in a tree as an inverse limit of iterated wreath products as in \cite{SerreTrees}.  For each $w\in \prod_\graph \Ztwo$ define $\End(w)$ to be the set of all $i \in I$ such that $\length(wi)<\length(w)$.   Define $\ball_n$ to be the set of all chambers $C$ such that $\length(\delta(C,C_0)) < n$.

If $g$ is an automorphism of $\ball_n$ such that for any $S_i$ residue that intersects $\ball_n$ nontrivially there exists $g_i\in G_i$ such that for each $C\in \bldg_i$ we have $p_i \circ g(C)= g_i \circ p_i(C)$ then $g$ is a \emph{structure preserving automorphism of $\ball_n$}.  The group of all structure preserving automorphisms of $\ball_n$ is denoted $\spaut(\ball_n)$.  We will determine $\spaut(\ball_n)$ recursively, first noting that $\spaut(\ball_0) = id$.  The restriction of $\spaut(\ball_{n+1})$ to $D_n$ maps $\spaut(\ball_{n+1})$ to $\spaut(\ball_n)$.  Let $K_{n+1}$ to be the kernel of the restriction.  We will show that the restriction is onto and describe the kernel.

We partition the set of chambers in $\ball = \ball_{n+1}\setminus \ball_n$ into two parts:
\begin{eqnarray*}
\mathcal{P}_1 &=&\{C\in \ball \vert\  |\End(\delta(C,C_0)|=1\}\\
\mathcal{P}_2 &=&\{C\in \ball \vert\  |\End(\delta(C,C_0)|\geq2\}.
\end{eqnarray*}

The sets $\mathcal{P}_1$ and $\mathcal{P}_2$ are disconnected.  If any two chambers $C_1$ and $C_2$ in $D$ are $i$-adjacent, then there is some unique $C$ in the $i$-panel that is in $\ball_n$ by Proposition \ref{bldg1}.  Thus $\delta(C_1,C_0)=\delta(C,C_0)\cdot i= \delta(C_2,C_0)$. Hence $\End(\delta(C_1,C_0))=\End(\delta(C_2,C_0))$ and $C_1$ and $C_2$ are in the same partition. It follows that there are no relations between chambers in $\mathcal{P}_1$ and $\mathcal{P}_2$. 

\begin{lemma} \label{lem:ext}
Let $\Phi_n$ denote the set $\{C\in D_n\vert C\in\panel_i(C') \text{ for some } C' \in \mathcal{P}_1\}$. The group $\spaut(D_n \cup \mathcal{P}_1)=\spaut(D_n) \rtimes \prod_{\Phi_n} \Stab_{G_i}(p_i(C))$ up to isomorphism.
\end{lemma}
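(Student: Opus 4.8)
The plan is to present the identity as a split exact sequence induced by the restriction homomorphism $\rho\colon\spaut(D_n\cup\mathcal P_1)\to\spaut(D_n)$ sending $g$ to $g|_{D_n}$. First I would check $\rho$ is well defined: every element of $\spaut(D_n\cup\mathcal P_1)$ fixes $C_0$ and preserves $\length(\delta(\cdot,C_0))$, hence preserves the sub-ball $D_n$ setwise, and since the structure-preserving condition is local to residues the restriction again lies in $\spaut(D_n)$. The geometric input driving everything is the way $\mathcal P_1$ attaches to $D_n$: each $C'\in\mathcal P_1$ has $|\End(\delta(C',C_0))|=1$, say $\End=\{i\}$, so $C'$ lies in a single panel $\panel_i(C')$ whose projection of $C_0$ (Proposition \ref{bldg1}) is a chamber $C\in D_n$ with $\length(\delta(C,C_0))=n-1$, whence $C\in\Phi_n$ and this panel meets $D_n$ only in its gate $C$. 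Since all non-gate chambers of $\panel_i(C)$ share the same $\tW$-distance to $C_0$, they are either all in $\mathcal P_1$ or all in $\mathcal P_2$; thus $\mathcal P_1$ is the disjoint union, over the panels joining $D_n$ to $\mathcal P_1$ (each carrying a gate $C\in\Phi_n$ and type $i$), of the non-gate chambers of those panels.

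Next I would identify $\ker\rho$. If $g$ fixes $D_n$ pointwise, then on each such gate-panel $\panel_i(C)$ the structure-preserving condition supplies some $g_i\in G_i$ with $p_i\circ g=g_i\circ p_i$; evaluating at the fixed gate gives $g_i(p_i(C))=p_i(C)$, so $g_i\in\Stab_{G_i}(p_i(C))$, and since $p_i$ is an isomorphism onto $E_i$ this $g_i$ determines $g$ on all of $\panel_i(C)\cap\mathcal P_1$. Conversely, because these panels are pairwise disjoint away from $D_n$, an arbitrary family of elements $g_i\in\Stab_{G_i}(p_i(C))$ (one per gate-panel) assembles into a well-defined automorphism fixing $D_n$ and structure-preserving on each panel. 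This yields $\ker\rho\cong\prod_{\Phi_n}\Stab_{G_i}(p_i(C))$, a normal subgroup.

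I would then prove $\rho$ is onto by extending a given $g\in\spaut(D_n)$ across $\mathcal P_1$. The key observation is that membership of $g$ in $\spaut(D_n)$ already guarantees, for each gate-panel $\panel_i(C)$ (which meets $D_n$ only at $C$), the existence of some $g_i\in G_i$ with $g_i(p_i(C))=p_i(g(C))$; defining the extension on the non-gate chambers of $\panel_i(C)$ by $p_i^{-1}\circ g_i\circ p_i$ produces a structure-preserving preimage of $g$. Since $g$ preserves $\delta(\cdot,C_0)$ it permutes $\Phi_n$ respecting panel types, so each $\panel_i(g(C))$ is again a gate-panel and the extension is consistent.

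The step I expect to be the main obstacle is upgrading this set-theoretic section to a genuine homomorphism $\sigma\colon\spaut(D_n)\to\spaut(D_n\cup\mathcal P_1)$ splitting $\rho$, which is what actually yields the semidirect (rather than merely extension) structure $\spaut(D_n)\rtimes\prod_{\Phi_n}\Stab_{G_i}(p_i(C))$. The difficulty is that the transition element $g_i$ above is determined only up to the coset $g_i\Stab_{G_i}(p_i(C))$, so the per-panel choices must be made compatibly under composition. I would resolve this by recognizing the construction as a generalized permutational wreath product: $\spaut(D_n)$ acts on the set of gate-panels by permuting them, the identifications $\Stab_{G_i}(p_i(C))\to\Stab_{G_i}(p_i(g(C)))$ being implemented by conjugation by the transition element, and $\sigma$ is the embedding of the top group of this wreath product. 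Verifying that these conjugation identifications compose correctly (independently of the coset ambiguity, i.e.\ up to inner automorphism) is the crux, after which the standard recognition theorem for internal semidirect products finishes the proof.
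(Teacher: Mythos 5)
Your proposal is correct and takes essentially the same approach as the paper's own proof: both locate the gate of each panel meeting $\mathcal{P}_1$, use the structure-preserving condition at the gate to produce the transition element $g_i$ (giving surjectivity of the restriction), identify the kernel with $\prod_{\Phi_n}\Stab_{G_i}(p_i(C))$ using the pairwise disjointness of these panels, and deal with the non-uniqueness of $g_i$ via inner automorphisms. If anything, your explicit construction of a homomorphic section (the generalized wreath-product/transversal argument) is more careful than the paper's proof, which leaves the semidirect-product structure implicit in the phrases ``up to isomorphism'' and ``a different choice induces an inner isomorphism.''
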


\begin{proof}
Fix any $i$ panel $\panel$ intersecting $\mathcal{P}_1$. Let $C\in \panel$ be the projection of $C_0$ onto $\panel$ as in Proposition \ref{bldg1}. Then $C$ is the only chamber in $\panel$ contained in $\ball_{n}$. By the definition of $\spaut(\ball_n)$ for any element $g\in \spaut(\ball_n)$ we know there is at least one element $g_i\in G_i$ such that $p_i \circ g (C)=g_i \circ p_i (C)$ so that $g_i^{-1} \Stab_{G_i}(p(C)) g_i$ is an extension of $\spaut(D_n)$ to that $i$ panel.  This $g_i$ is not necessarily unique but a different choice induces an inner isomorphism.  All such panels are disjoint so the actions on panels will commute, thus completing the claim.
\end{proof}

\begin{lemma}
There is a unique extension of $\spaut(D_n)$ to $\spaut(D_n \cup \mathcal{P}_2)$.
\end{lemma}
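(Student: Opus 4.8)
The plan is to show that the restriction map $\spaut(D_n\cup\mathcal{P}_2)\to\spaut(D_n)$ is a bijection, i.e.\ that every $g\in\spaut(D_n)$ admits exactly one structure preserving extension across $\mathcal{P}_2$. Fix $C\in\mathcal{P}_2$, put $w=\delta(C,C_0)$ and $J=\End(w)$, so $|J|\ge 2$. The set of right descents $\End(w)$ of any Coxeter element is spherical, so in the right-angled case the generators in $J$ pairwise commute and $J$ spans a complete subgraph of $\graph$; hence $\panel_J(C)$ is a spherical residue which, by Remark \ref{bldgEadj}, is carried by $p$ isomorphically onto the product $\prod_{j\in J}E_j$. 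Write $C=(c_j)_{j\in J}$ in these coordinates. Let $C^{*}$ be the projection of $C_0$ onto $\panel_J(C)$ (Proposition \ref{bldg1}). Since every $j\in J$ strictly decreases $\length$, one checks $C^{*}=(c_j^{*})_{j\in J}$ with $c_j^{*}\ne c_j$ for all $j$ and $\length(\delta(C^{*},C_0))=\length(w)-|J|$. For each $j\in J$ the $j$-panel through $C^{*}$ consists of the chambers agreeing with $C^{*}$ off the $j$-coordinate; each such chamber lies at distance at most $\length(w)-|J|+1\le \length(w)-1<n$ from $C_0$, so this whole panel sits inside $D_n$. As $p_j$ maps it bijectively onto $E_j$, the action of $g$ there is implemented by a single $g_j\in G_j$ whose action on $E_j$ is determined by $g|_{D_n}$.

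For uniqueness I would argue that any structure preserving extension $\bar g$ must satisfy $p_j(\bar g(C))=g_j\,p_j(C)$ for every $j\in J$, which forces $\bar g(C)$ to be the unique chamber of $\panel_J(\bar g(C^{*}))$ with these coordinates. Fix distinct $j,k\in J$ and restrict to the $\{j,k\}$-residue $E_j\times E_k$ through $C$ (freezing the other coordinates at $c_l$). In this square the neighbour $C_j$ of $C$ with $j$-coordinate $c_j^{*}$, the neighbour $C_k$ with $k$-coordinate $c_k^{*}$, and the two panels $E_j\times\{c_k^{*}\}$ and $\{c_j^{*}\}\times E_k$ all lie in $D_n$, while $C$ is $j$-adjacent to $C_j$ and $k$-adjacent to $C_k$. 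Since $\bar g$ preserves $j$- and $k$-adjacency, $\bar g(C)$ must be the unique common $j$/$k$-neighbour of $\bar g(C_j)=(g_j c_j^{*},g_k c_k)$ and $\bar g(C_k)=(g_j c_j,g_k c_k^{*})$, namely the chamber with $j$-coordinate $g_j c_j$ and $k$-coordinate $g_k c_k$. Letting the pair $\{j,k\}$ range over $J$ pins down every coordinate of $\bar g(C)$, giving uniqueness; in particular the value does not depend on the chosen pair.

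For existence I would then \emph{define} $g(C)$ by the very same coordinate formula and check that the extended map on $D_n\cup\mathcal{P}_2$ is a structure preserving automorphism. The only genuine point is consistency: the element $g_j$ read off from the $j$-panel of $C^{*}$ must agree, as an action on $E_j$, with the $g_j$ one meets along every other parallel $j$-panel of the residue used in adjacency checks. This is the expected rigidity of a product residue. Parallel $j$-panels differ by flipping coordinates $l\ne j$, and any two $D_n$-chambers related by a single such flip are $l$-adjacent, so preservation of $l$-adjacency forces their $j$-coordinates to transform by one and the same $g_j$; chaining these flips connects the panel through $C^{*}$ to every relevant parallel panel while staying inside $D_n$. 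Granting this, preservation of all adjacencies follows from the product description: two chambers of $\mathcal{P}_2$ that are $i$-adjacent share an $i$-panel whose projection $C''$ lies in $D_n$ and has $i\in J$ for both, so each image is $i$-adjacent to $g(C'')$ and hence they lie in a common $i$-panel; injectivity and the structure preserving condition are immediate, and the inverse is built the same way from $g^{-1}$. \textbf{Main obstacle.} I expect this rigidity/consistency verification—that a single $g_j$ governs each factor of the spherical residue and that the coordinate definition is independent of all the choices—to be the crux; the remaining steps are routine bookkeeping with coordinates on spherical residues.
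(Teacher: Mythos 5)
Your proposal is correct and follows essentially the same route as the paper: both pass to the $\End(\delta(C,C_0))$-residue, which (since descent sets are spherical and hence span complete subgraphs) is a product $\prod_{j\in J}E_j$ whose relevant panels near the projection of $C_0$ lie inside $\ball_n$, extend $g$ across it by the resulting product element of $\prod_{j\in J}G_j$, and deduce uniqueness from the fact that two or more adjacencies into $\ball_n$ pin down a chamber of the product. The consistency point you flag as the main obstacle---that a single $g_j$ governs every parallel $j$-panel of the residue inside $\ball_n$---is exactly the step the paper compresses into the one-line assertion that the action on $\panel\cap\ball_n$ ``corresponds to an element in $\prod G_i$ acting on $\prod E_i$,'' and your flip/adjacency rigidity argument is a correct justification of that assertion.
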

\begin{proof}
Fix some $C_2 \in \mathcal{P}_2$ and let $\panel$ denote the $\End(\delta(C_2,C_0))$ residue containing $C_2$.  Let $C$ be the projection of $\panel$ onto $C_0$.  Now, $\panel$ is a right angled building of type $\prod_{\End(\delta(C_2,C_0))} \Ztwo.$  Thus it is isomorphic to its image under $p$ in $E$. The action on $\panel \cap \ball_n$ then corresponds to an element in $\prod G_i$ acting on $\prod E_i$, hence it can extend. Uniqueness follows in the same way since for any $g\in \spaut(\ball_n)$ we have that $g(C_2)$ must be $i$-adjacent to the projection of the $i$-panel onto $C_0$.  Any two such adjacencies completely determine the chamber in the product.
\end{proof}

Combining these we get the following result.
\begin{theorem}\label{thm:ext}
$\Stab_{\spaut(\tE)} (C_0)=\varprojlim \spaut(D_n)$
\end{theorem}

Thus we have that any structure preserving automorphism of a finite ball contained in $\tE$ can be extended to a structure preserving automorphism of the whole space.  

The previous theorem gives rise to the following results.
\begin{cor}
If $G_i$ acts freely on $E_i$ for all $i \in I$ then $\Stab_{\spaut(\tE)}(C_0)$ is trivial.
\end{cor}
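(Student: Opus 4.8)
The plan is to feed the freeness hypothesis into Theorem~\ref{thm:ext} through the two extension lemmas that precede it. First I would record the one observation that drives everything: if $G_i$ acts freely on $E_i$, then for every chamber $C$ the point stabilizer $\Stab_{G_i}(p_i(C))$ is trivial, since $p_i(C)$ is an honest chamber of $E_i$. Everything after this is bookkeeping.

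Next I would run the recursion of Theorem~\ref{thm:ext} and show that every restriction map $\spaut(\ball_{n+1})\to\spaut(\ball_n)$ is an isomorphism. Using the partition $\ball_{n+1}\setminus\ball_n=\mathcal{P}_1\cup\mathcal{P}_2$ and the fact, established in the paragraph before Lemma~\ref{lem:ext}, that these two sets are disconnected, the kernel $K_{n+1}$ of the restriction splits as the product of the contribution coming from extending over $\mathcal{P}_1$ and the contribution coming from extending over $\mathcal{P}_2$. By Lemma~\ref{lem:ext} the $\mathcal{P}_1$-contribution is $\prod_{\Phi_n}\Stab_{G_i}(p_i(C))$, which is trivial by the freeness observation, so $\spaut(\ball_n\cup\mathcal{P}_1)=\spaut(\ball_n)$. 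By the lemma asserting a unique extension to $\mathcal{P}_2$, the restriction $\spaut(\ball_n\cup\mathcal{P}_2)\to\spaut(\ball_n)$ is already a bijection and contributes nothing to the kernel. Hence $K_{n+1}$ is trivial and the restriction map is an isomorphism.

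Finally I would close by induction on $n$. The base case is $\spaut(\ball_0)=\{\mathrm{id}\}$, and the previous step gives $\spaut(\ball_{n+1})\cong\spaut(\ball_n)$ for all $n$, so every $\spaut(\ball_n)$ is trivial. Since $\Stab_{\spaut(\tE)}(C_0)=\varprojlim\spaut(\ball_n)$ by Theorem~\ref{thm:ext}, and the inverse limit of a system of trivial groups is trivial, the stabilizer is trivial, which is the claim.

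I do not anticipate a genuine obstacle: there is no hard estimate or construction to perform. The only step requiring care is identifying the kernel $K_{n+1}$ correctly as the product of the two independent extensions, which is precisely what the disconnectedness of $\mathcal{P}_1$ and $\mathcal{P}_2$ licenses, since it lets an automorphism be extended over the two pieces independently. With that in hand, the corollary is a direct consequence of Lemma~\ref{lem:ext}, the $\mathcal{P}_2$ lemma, and Theorem~\ref{thm:ext}.
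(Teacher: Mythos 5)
Your proof is correct and follows exactly the route the paper intends: the corollary is stated as an immediate consequence of Theorem~\ref{thm:ext}, and your argument simply makes explicit the induction that freeness kills the factors $\Stab_{G_i}(p_i(C))$ in Lemma~\ref{lem:ext}, so each restriction $\spaut(\ball_{n+1})\to\spaut(\ball_n)$ is an isomorphism and the inverse limit collapses to the trivial group. No gaps; this is precisely the bookkeeping the paper leaves to the reader.
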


\begin{cor}
If $i\in I$ is such that $<s_i>$ is not contained in a finite factor of $W$ and $G_i$ has non-trivial stabilizer at some element of $E_i$, then $\spaut(\tE)$ is uncountable.
\end{cor}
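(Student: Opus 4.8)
The plan is to work entirely inside the inverse limit description $\Stab_{\spaut(\tE)}(C_0)=\varprojlim_n \spaut(\ball_n)$ furnished by Theorem~\ref{thm:ext}, and to show that the tower has nontrivial kernels at infinitely many levels; this alone forces the limit, and hence $\spaut(\tE)$, to be uncountable. First I would record the structural consequence of Lemma~\ref{lem:ext} together with the unique extension over $\mathcal{P}_2$: each restriction $\spaut(\ball_{n+1})\to\spaut(\ball_n)$ is a surjection that splits, so $\spaut(\ball_{n+1})=\spaut(\ball_n)\rtimes K_{n+1}$ with $K_{n+1}=\prod_{\Phi_n}\Stab_{G_i}(p_i(C))$ coming only from the $\mathcal{P}_1$-part. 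Iterating the splitting identifies $\varprojlim_n\spaut(\ball_n)$ set-theoretically with $\prod_n K_n$, so its cardinality is $\prod_n|K_n|$. Thus it suffices to produce infinitely many $n$ with $K_n\neq 1$, since then $\prod_n|K_n|\geq 2^{\aleph_0}$.

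Next I would translate the two hypotheses into geometry. Saying that $\langle s_i\rangle$ lies in no finite irreducible factor of $\prod_\graph\Ztwo$ means precisely that the component of $i$ in the commuting-complement of $\graph$ is nontrivial; equivalently there is a $j$ with $\{i,j\}\notin E(\graph)$, so that $\langle s_i,s_j\rangle$ is infinite dihedral and $s_i s_j s_i\cdots$ is reduced of every length. Using \ref{WD3} repeatedly I would build an infinite geodesic ray $C_0,C_1,C_2,\dots$ in $\tE$ with $\delta(C_0,C_n)=s_i s_j s_i\cdots$ ($n$ letters), choosing the continuing chamber freely inside each panel. Since any such word ending in $s_i$ has right-descent set exactly $\{i\}$, the chambers of $\panel_i(C_{2k})\setminus\{C_{2k}\}$ lie in $\mathcal{P}_1$ and $C_{2k}$ is the projection of $C_0$ onto that panel; hence $\panel_i(C_{2k})$ contributes the factor $\Stab_{G_i}(p_i(C_{2k}))$ to the kernel $K_{2k+2}$, and these occur at distinct levels.

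It then remains to steer the projections onto a point with nontrivial stabilizer. Fix $x\in E_i$ with $\Stab_{G_i}(x)\neq 1$. Because $p$ restricts to an isomorphism on $S_i$- and $S_j$-residues (Remark~\ref{bldgEadj}), an $i$-move changes only the $E_i$-coordinate while a $j$-move leaves it fixed; thus $p_i(C_{2k})$ equals the value $a_k$ selected at the $k$-th $i$-move, subject only to $a_{k+1}\neq a_k$. Taking $a_k=x$ for all even $k$ (and any other value at odd $k$, which is possible since $|E_i|\geq 2$) gives $p_i(C_{2k})=x$ for infinitely many $k$, so $K_{2k+2}\supseteq\Stab_{G_i}(x)\neq 1$ for infinitely many $k$. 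Therefore infinitely many kernels are nontrivial, $\varprojlim_n\spaut(\ball_n)$ has cardinality at least $2^{\aleph_0}$, and $\spaut(\tE)$ is uncountable.

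The cardinality bookkeeping and the descent-set computation are routine; the genuinely delicate step is the third paragraph, namely forcing infinitely many boundary $i$-panels to have their projection sent by $p_i$ to the fixed point $x$, since $\Stab_{G_i}(p_i(C))$ is nontrivial only at the right points. I expect the care to lie in confirming that along a geodesic the $E_i$-coordinate under $p$ is indeed locally constant except at $i$-moves (this is exactly where $\{i,j\}\notin E(\graph)$, and not an edge, is used), and in checking that the nontrivial element of $\Stab_{G_i}(x)$ really yields a nontrivial automorphism of $\tE$ rather than one acting through a kernel of the $G_i$-action on $E_i$ — i.e.\ that there is genuine room ($|E_i|\geq 2$ together with a point $x$ whose stabilizer moves another chamber) to keep reusing $x$ without the covering structure of $\tE$ obstructing the choices.
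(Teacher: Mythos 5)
Your proposal is correct and takes essentially the paper's intended route: the paper offers no written proof beyond ``the previous theorem gives rise to the following results,'' and your argument---identifying $\varprojlim_n \spaut(\ball_n)$ from Theorem \ref{thm:ext} with the product of the level kernels supplied by Lemma \ref{lem:ext}, then using a non-edge $\{i,j\}$ to build an alternating geodesic whose even chambers project to the point $x$ with nontrivial stabilizer, so that infinitely many kernels $K_{2k+2}$ are nontrivial---is precisely the derivation the paper has in mind. The caveat you flag at the end (that the hypothesis must be read for the image of $G_i$ in the symmetric group of $E_i$, so that a nontrivial stabilizer element genuinely moves chambers of the panel) is real but is equally implicit in the paper's statement of Lemma \ref{lem:ext}, so it is not a gap of your argument relative to the paper's.
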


\begin{cor}
In particular, if for some $i\in I$ $<s_i>$ is not contained in a finite factor of $W$ and $E_i$ has at least three elements, then the full automorphism group of $\tE$, where each $G_i$ is the symmetric group on $|E_i|$, is uncountable.
\end{cor}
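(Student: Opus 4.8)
The plan is to deduce this statement from the preceding corollary by specializing the groups $G_i$ and checking its two hypotheses, after first identifying $\spaut(\tE)$ with the full automorphism group $\aut(\tE)$ in this case.

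First I would record why, when each $G_i$ is the symmetric group on $|E_i|$, the group $\spaut(\tE)$ is the entire automorphism group $\aut(\tE)$. The key point is that each $E_i$ is a rank-$1$ building, i.e.\ a bare set (Example~\ref{Rank1}), whose automorphism group is precisely $\text{Sym}(E_i)=G_i$. The inclusion $\spaut(\tE)\subseteq\aut(\tE)$ holds by definition. For the reverse inclusion, let $g$ be any type-preserving automorphism of $\tE$ and $\panel_i$ any $i$-panel. Since $p_i$ restricts to a building isomorphism from each $i$-panel onto $E_i$, the map $g$ induces a bijection $g_i$ of the set $E_i$ with $p_i\circ g=g_i\circ p_i$ on $\panel_i$; as $E_i$ is just a set, $g_i\in\text{Sym}(E_i)=G_i$. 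Hence $g$ is structure preserving, giving $\aut(\tE)\subseteq\spaut(\tE)$ and therefore equality.

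Next I would verify the two hypotheses of the preceding corollary for the chosen index $i$. The hypothesis that $\langle s_i\rangle$ is not contained in a finite factor of $W$ is exactly our assumption. For the second hypothesis, fix any $x\in E_i$; because $|E_i|\geq 3$, the point stabilizer $\Stab_{G_i}(x)=\Stab_{\text{Sym}(E_i)}(x)\cong\text{Sym}(E_i\setminus\{x\})$ is the symmetric group on at least two letters and so contains a transposition. Thus $G_i$ has nontrivial stabilizer at $x$, and the preceding corollary applies to give that $\spaut(\tE)$ is uncountable. By the first step this group equals $\aut(\tE)$, which is therefore uncountable.

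Since the statement is a specialization, I expect no real obstacle; the only step needing care is the identification $\spaut(\tE)=\aut(\tE)$, which is exactly where the hypothesis $G_i=\text{Sym}(|E_i|)$ (rather than a proper subgroup of the symmetric group) is used, via the fact that the rank-$1$ factors have full symmetric automorphism groups so that the structure-preserving condition becomes vacuous.
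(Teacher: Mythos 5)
Your proposal is correct and matches the paper's intent: the corollary is stated as an immediate specialization ("in particular") of the preceding corollary, obtained by taking $G_i = \mathrm{Sym}(E_i)$ so that $\spaut(\tE)=\aut(\tE)$ (since $p_i$ identifies each $i$-panel with the bare set $E_i$, every type-preserving automorphism is automatically structure preserving), and by noting that $|E_i|\geq 3$ forces point stabilizers in $\mathrm{Sym}(E_i)$ to be nontrivial. Your explicit verification of both points is exactly the argument the paper leaves implicit.
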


\section{Results for Graph Products}
We now consider structure preserving automorphisms in terms of graph products of buildings.  Fix $\graph$ a graph with vertex set $I$, as before.  Let $\{(W_i,S_i)\}_I$ be a collection of Coxeter systems with a collection of buildings $\{\bldg_i\}_I$ such that each $\bldg_i$ is a building of type $(W_i,S_i)$.  Let $\tbldg$ be the graph product of the $\bldg_i$ over $\graph$.  Further let $E_i$ be the rank one building with chamber set $\bldg_i$ and $\tE$ the graph product of the $E_i$ over $\graph$.

\begin{theorem}
For each $i\in I$ let $G_i=\aut(\bldg_i)$.  Then $\spaut(\t{E}) \cong \aut(\tbldg).$
\end{theorem}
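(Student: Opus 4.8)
The plan is to fix the isomorphism $\phi\colon \tE \to \tbldg$ supplied by Theorem~\ref{thm:mainresult}, use it to identify the two chamber sets, and then show that $\aut(\tbldg)$ and $\spaut(\tE)$ are literally the \emph{same} subgroup of the symmetric group on this common chamber set; conjugation by $\phi$ then furnishes the asserted isomorphism (indeed it is the identity on underlying permutations). By Remark~\ref{bldgEadj} the identification carries $i$-panels of $\tE$ to $S_i$-residues of $\tbldg$, and $p_i$ restricts to a type-preserving building isomorphism of each $S_i$-residue onto $\bldg_i=E_i$. Everything then reduces to proving two inclusions of subgroups.

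First I would show $\aut(\tbldg) \subseteq \spaut(\tE)$. Let $g \in \aut(\tbldg)$. Being type preserving, $g$ permutes the $S_i$-residues, hence under the identification permutes the $i$-panels of $\tE$, so $g$ preserves $i$-adjacency and is an automorphism of the right angled building $\tE$. For a fixed $S_i$-residue $\panel$ with image $g(\panel)$, the composite $g_i := (p_i|_{g(\panel)}) \circ g|_{\panel} \circ (p_i|_{\panel})^{-1}$ is a composition of building isomorphisms $\bldg_i \to \panel \to g(\panel) \to \bldg_i$, hence lies in $\aut(\bldg_i)=G_i$, and by construction $p_i \circ g = g_i \circ p_i$ on $\panel$. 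This is exactly the structure-preserving condition, so $g \in \spaut(\tE)$.

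The reverse inclusion $\spaut(\tE) \subseteq \aut(\tbldg)$ is where the real content lies, and I expect it to be the main obstacle: a structure-preserving automorphism a priori only controls the coarse $i$-panel data of $\tE$, whereas membership in $\aut(\tbldg)$ requires preserving the finer $\tW$-distance $\delta$, including $s$-adjacency for each individual $s \in S_i$. Let $g \in \spaut(\tE)$. As above $g$ permutes the $S_i$-residues, and on each residue $\panel$ the structure-preserving element $g_i \in G_i = \aut(\bldg_i)$ is itself type preserving; transported through the building isomorphism $p_i$, the map $g$ therefore preserves $s$-adjacency, with its type, within $\panel$ for every $s \in S_i$. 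Since every $s$-adjacency of $\tbldg$ with $s \in S_i$ occurs inside a single $S_i$-residue, $g$ preserves $s$-adjacency and its type for all $s \in S = \bigsqcup_i S_i$.

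Finally I would invoke the standard fact that a chamber bijection of a building preserving $s$-adjacency and its type for every $s$ is automatically a type-preserving building automorphism: it carries galleries to galleries of the same type, hence minimal galleries to minimal galleries, so the product of the types along a minimal gallery from $C$ to $D$, which equals $\delta(C,D)$, is preserved. Thus $g \in \aut(\tbldg)$, establishing the second inclusion and hence equality of the two subgroups. Since both inclusions act as the identity on the underlying permutations of the common chamber set, composition is respected, so the equality of subgroups is an equality of groups and $\spaut(\tE) \cong \aut(\tbldg)$.
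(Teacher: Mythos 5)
Your proposal is correct and takes essentially the same route as the paper's own proof: identify $\tE$ with $\tbldg$ via the isomorphism $\phi$ of Theorem~\ref{thm:mainresult} and establish both inclusions by transporting $s$-adjacency through the residue isomorphisms $p_i$. The only difference is that you spell out two steps the paper leaves implicit---the explicit formula $g_i = (p_i|_{g(\panel)}) \circ g|_{\panel} \circ (p_i|_{\panel})^{-1}$ witnessing that an element of $\aut(\tbldg)$ is structure preserving, and the gallery argument showing that a type-preserving adjacency bijection is automatically a building automorphism---both of which are sound.
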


\begin{proof}
From Theorem \ref{thm:mainresult} we have an isomorphism $\phi:\tbldg \to \tE$ which preserves projections.  The map $\phi$ induces an action of $\aut(\tbldg)$ on $\tE$.  The action of $\aut(\tbldg)$ preserves $i$-residues so that the induced map on $\tE$ is type preserving hence $\aut(\tbldg)$ injects into $\spaut(\tE)$.

We need to check that the action of $\spaut(\t{E})$ on $\tbldg$ is type preserving.  Fix some $g_{\tE}$ in $\spaut(\tE)$.  Let $g_{\tbldg}$ be the induced map on $\tbldg$ under $\phi$.  We verify that $g_{\tbldg}$ is a building automorphism.  Fix two chambers $C$ and $C'$ in $\tbldg$ with $C\sim_s C'$ for some $s\in S_i$.  We let $\phi(C)=C_E$ and $\phi(C')=C'_E$.  Then we have that $C_E \sim_i C'_E$.

Since $g_{\tE}$ is structure preserving there exists some $g_i \in G_i$ such that $p_i(g_{\t{E}}(C_E))=g_i(p_i(C_E))$ and $p_i(g_{\t{E}}(C'_E))=g_i(p_i(C'_E))$. Thus we have that $g_i(p_i(C))\sim_{s} g_i(p_i(C'))$ which implies that $p_i(g_{\tE}(C_E))\sim_s p_i(g_{\tE}(C'_E))$ in $\bldg_i$.  Hence $\phi(g_{\tE}(C_E))\sim_s \phi(g_{\tE}(C'_E))$ so that $g_{\tbldg}(C)\sim_s g_{\tbldg}(C')$ as desired.
\end{proof}

Thus we can consider the automorphism group of any graph product of buildings as a subgroup of the automorphism group of the underlying right angled building.  We can apply the previous results of right angled buildings to $\aut(\tbldg)$ to get the following:

\begin{cor}\label{cor:gruncount}
If there exists a thick $\bldg_i$ with $W_i$ not contained in a finite factor of $W$, then $\aut(\tbldg)$ is uncountable.
\end{cor}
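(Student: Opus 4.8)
The plan is to deduce this from the uncountability criterion for $\spaut(\tE)$ established above (the corollary asserting that $\spaut(\tE)$ is uncountable as soon as some $\langle s_i\rangle$ lies in no finite factor of $\prod_\graph \Ztwo$ and the corresponding $G_i$ has a nontrivial stabilizer at a chamber of $E_i$), together with the isomorphism $\spaut(\tE)\cong\aut(\tbldg)$ of the preceding theorem. Taking $G_i=\aut(\bldg_i)$ for all $i$, that isomorphism reduces the problem to producing a single index $i$ satisfying the two hypotheses of that criterion.

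For the first hypothesis I would pass through the residue dictionary of Theorem \ref{thm:mainresult} and Remark \ref{bldgEadj}, under which the $S_i$-residues of $\tbldg$ are exactly the $i$-panels of $\tE$ and the generator $s_i$ of $\prod_\graph \Ztwo$ plays the role of $W_i$. I would then check that the hypothesis ``$W_i$ is not contained in a finite factor of $W$'' transfers to ``$s_i$ is not contained in a finite factor of $\prod_\graph \Ztwo$,'' i.e.\ that there are arbitrarily long reduced words $w$ with $\End(w)=\{i\}$. This is precisely the combinatorial input that makes the inverse limit $\varprojlim\spaut(D_n)$ of Theorem \ref{thm:ext} acquire infinitely many independent extension steps at the index $i$, since by Lemma \ref{lem:ext} each such step contributes a factor built from $\Stab_{G_i}(p_i(C))$.

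The second hypothesis is the crux, and I expect it to be the main obstacle: I must show that thickness of $\bldg_i$ forces $\aut(\bldg_i)$ to fix some chamber nontrivially. In rank one this is immediate, as a thick rank-one building is a set of size at least three and $\aut(\bldg_i)$ is its full symmetric group, whose point stabilizers are nontrivial. In higher rank the natural argument is that the stabilizer of a chamber $C$ in a sufficiently homogeneous thick building acts transitively on the chambers $s$-adjacent to $C$ other than $C$ itself; thickness makes this set have at least two elements, so the stabilizer cannot be trivial. I would flag that this homogeneity is automatic for the classical (e.g.\ strongly transitive or Moufang) buildings but is genuinely delicate for exotic rank-two examples---free generalized polygons and rigid thick trees can have very small automorphism groups---so the passage from ``thick'' to ``nontrivial chamber stabilizer'' is exactly where the statement needs its buildings to be rich enough.

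Granting both hypotheses, the criterion applies: $\varprojlim\spaut(D_n)$ surjects onto an infinite direct product of copies of the nontrivial group $\Stab_{G_i}(\cdot)$, one per generation of $i$-panels, and such a product has cardinality at least $2^{\aleph_0}$. Hence $\spaut(\tE)$ is uncountable, and transporting along $\spaut(\tE)\cong\aut(\tbldg)$ yields the claim.
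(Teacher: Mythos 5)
Your reduction is exactly the paper's own (implicit) proof: the paper justifies this corollary only by the sentence preceding it (``we can apply the previous results of right angled buildings to $\aut(\tbldg)$''), i.e.\ by combining the theorem that $\spaut(\tE)\cong\aut(\tbldg)$ when $G_i=\aut(\bldg_i)$ with the unnumbered corollary of Theorem~\ref{thm:ext} asserting uncountability of $\spaut(\tE)$ once some $\langle s_i\rangle$ lies in no finite factor and $G_i$ has a nontrivial stabilizer at some element of $E_i$. Your description of the underlying mechanism --- infinitely many $i$-panels far from $C_0$, each contributing an independent factor $\Stab_{G_i}(p_i(C))$ by Lemma~\ref{lem:ext} inside the inverse limit of Theorem~\ref{thm:ext}, whence cardinality at least $2^{\aleph_0}$ --- is the correct engine, and your handling of the finite-factor hypothesis is at the same level of precision as the paper's.

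The step you flag as the crux is a genuine gap, and you are right that it cannot be closed from the stated hypotheses; in fact it is a gap in the paper, not merely in your write-up. Thickness of $\bldg_i$ forces $\Stab_{\aut(\bldg_i)}(C)$ to be nontrivial only in rank one, where $\aut(\bldg_i)$ is the full symmetric group on a set with at least three elements (that case is precisely the paper's last unnumbered corollary). In higher rank, thick buildings can be rigid: by a theorem of Mendelsohn, every group --- the trivial group included --- is the full collineation group of some projective plane, i.e.\ of some thick building of type $A_2$; and there exist exotic thick buildings of type $\tilde{A}_2$ whose full automorphism group is a countable group acting freely on chambers (e.g.\ equal to a lattice acting simply transitively on vertices). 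Taking $\graph$ to be two vertices with no edge and both $\bldg_i$ of this exotic $\tilde{A}_2$ kind, the hypotheses of the corollary hold under any reading of ``finite factor'' (each $W_i$ is infinite), yet every $G_i=\aut(\bldg_i)$ acts freely on $E_i$, so the first corollary after Theorem~\ref{thm:ext} makes $\Stab_{\spaut(\tE)}(C_0)$ trivial and $\spaut(\tE)\cong\aut(\tbldg)$ is countable; the statement as written is therefore false without an added richness hypothesis. One small correction to your discussion: rigid thick trees are not the relevant danger, since in this framework a thick tree is a graph product of rank-one factors and $\spaut$ is computed from the symmetric groups $G_i$, not from the automorphism group of the tree itself; the failure occurs exactly when some factor $\bldg_i$ of rank at least two is rigid. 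So the corollary should either be restricted to rank-one (or otherwise non-rigid) factors, or carry an explicit hypothesis such as strong transitivity, the Moufang property, or simply ``$\aut(\bldg_i)$ has a nontrivial chamber stabilizer'' --- which is the repair your proposal anticipates.
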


\begin{cor} \label{cor:extend}
Any automorphism $g\in \aut(X)$ where $X$ is a combinatorial ball in $\tbldg$ centered at a chamber can be extended to an automorphism of $\tbldg$.
\end{cor}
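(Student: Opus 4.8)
The plan is to transport the whole question across the isomorphism $\phi$ of Theorem~\ref{thm:mainresult} and settle it inside $\tE$, where Theorem~\ref{thm:ext} already supplies the extension machinery. Recall that with $G_i=\aut(\bldg_i)$ we have the isomorphism $\spaut(\tE)\cong\aut(\tbldg)$ proved just above, and that $\phi$ identifies the chamber sets of $\tbldg$ and $\tE$, carries each $S_i$-residue of $\tbldg$ onto an $i$-panel of $\tE$, and preserves projections (Remark~\ref{bldgEadj}). The strategy is therefore: (i) convert $g\in\aut(X)$ into a structure preserving automorphism of the image region $\phi(X)\subseteq\tE$; (ii) extend that automorphism to a genuine element of $\spaut(\ball_n)$ for a suitable $n$; (iii) apply Theorem~\ref{thm:ext}, whose proof shows each restriction $\spaut(\ball_{m+1})\to\spaut(\ball_m)$ is onto, to lift it to $\Stab_{\spaut(\tE)}(C_0)$; and (iv) pull the result back through $\phi^{-1}$.

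For step (i), let $C_0$ be the center of $X$ and set $X_E=\phi(X)$. Since $g$ is a type preserving automorphism of the building $X$, it permutes the $S_i$-residues meeting $X$, and because $G_i=\aut(\bldg_i)$ is the \emph{full} automorphism group of $\bldg_i$, the action of $g$ on each such residue is realized after applying $p_i$ by some $g_i\in G_i$; this is precisely the assertion that $g_E:=\phi\circ g\circ\phi^{-1}$ is structure preserving on $X_E$. We may take $g$, hence $g_E$, to fix $C_0$: in $\tE$ the center of a combinatorial ball is intrinsic, being the unique chamber each of whose panels through it lies in the ball, so any automorphism preserves it (and in the general case one reduces to this rooted situation, which is exactly the one described by the inverse limit $\varprojlim\spaut(\ball_m)=\Stab_{\spaut(\tE)}(C_0)$ of Theorem~\ref{thm:ext}).

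The one genuine subtlety, and the step I expect to be the main obstacle, is that a combinatorial ball $X$ in $\tbldg$ is measured in the $\tW$-gallery metric, whereas the balls $\ball_n$ of Theorem~\ref{thm:ext} are measured in the $\prod_\graph\Ztwo$-metric of $\tE$, i.e.\ by counting residue crossings; the two do not coincide. What rescues the argument is that a single step in either chamber graph changes the distance in the other by a bounded amount, so $X_E$ is a \emph{finite} set contained in some $\ball_n$. It then remains to promote the structure preserving automorphism $g_E$ of the irregular region $X_E$ to one defined on all of $\ball_n$. This is carried out exactly as in the proof of Theorem~\ref{thm:ext}: partition $\ball_n\setminus X_E$ into the analogues of $\mathcal{P}_1$ and $\mathcal{P}_2$ and fill in using, respectively, arbitrary elements of the relevant $\Stab_{G_i}(p_i(C))$ (Lemma~\ref{lem:ext}) and the unique extension across the higher-rank residues. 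Because this filling never alters $g_E$ on $X_E$, the resulting element of $\spaut(\ball_n)$ genuinely restricts to $g_E$ on $X_E$, hence to $g$ on $X$.

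With $g_E\in\spaut(\ball_n)$ fixing $C_0$ in hand, step (iii) is immediate: the surjectivity of the restriction maps established in the proof of Theorem~\ref{thm:ext} lets us lift $g_E$ successively through $\spaut(\ball_{n+1}),\spaut(\ball_{n+2}),\dots$, producing an element of $\varprojlim\spaut(\ball_m)=\Stab_{\spaut(\tE)}(C_0)$. Transporting this element back by $\phi^{-1}$ yields an automorphism of $\tbldg$ whose restriction to $X$ is $g$, which is the desired extension.
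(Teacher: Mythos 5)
Your overall architecture is the same as the paper's: the paper states this corollary without a separate argument, deriving it by transporting Theorem~\ref{thm:ext} through the identification $\phi$ of Theorem~\ref{thm:mainresult} and the isomorphism $\spaut(\tE)\cong\aut(\tbldg)$, which is exactly your pipeline (i)--(iv). The problem is your step (i), which you treat as a formality but which is in fact the crux, and the justification you offer for it fails. For $g_E=\phi\circ g\circ\phi^{-1}$ to be structure preserving you need, for each $S_i$-residue $\res$ meeting $X$, an element $g_i\in G_i=\aut(\bldg_i)$ with $p_i\circ g=g_i\circ p_i$ on $\res\cap X$. By the gate property, $\res\cap X$ maps under $p_i$ to a combinatorial ball in $\bldg_i$, so all that $g$ hands you is an isomorphism from one ball of $\bldg_i$ onto another. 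The fact that $G_i$ is the \emph{full} automorphism group of $\bldg_i$ does not make such a partial map the restriction of a global automorphism: isomorphisms between balls of a building need not extend. Concretely, let $\bldg_i$ be the building of $\mathbb{P}^2(\mathbb{F}_q)$ with $q\geq 5$. Inside the star of a chamber the two punctured panels have no adjacencies to each other, so the type preserving automorphisms of the star fixing its center contain $\mathrm{Sym}(q)\times\mathrm{Sym}(q)$, of order $(q!)^2$; but the stabilizer of that chamber in $\aut(\bldg_i)=P\Gamma L_3(\mathbb{F}_q)$ has order only $q^3(q-1)^2e$. Take $X$ to be the radius-one ball of $\tbldg$ about $C_0$ and let $g$ act as one of the non-induced star automorphisms on the $S_i$-part of $X$ and as the identity on the rest (this is an automorphism of $X$, since in $X$ the only adjacencies are inside the panels through $C_0$). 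Then no $g_i$ exists, $g_E$ is not structure preserving, and in fact $g$ admits no extension at all: an extension would preserve $\res_{S_i}(C_0)\cong\bldg_i$ and restrict there to a building automorphism inducing the forbidden star map. So step (i) is precisely where the rank-one (right angled) hypothesis behind Theorem~\ref{thm:ext} does its work --- any bijection between finite subsets of the rank-one building $E_i$ does extend to a permutation of $E_i$ --- and without some hypothesis on the $\bldg_i$ the step cannot be repaired by anything you do afterwards.

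Two further inaccuracies in the same part of the argument, less central but worth fixing: the center of $\ball_n\subseteq\tE$ is \emph{not} ``the unique chamber each of whose panels through it lies in the ball'' --- every chamber at distance at most $n-2$ from $C_0$ has that property --- so your reduction to the rooted case needs a different argument (and handling automorphisms that genuinely move the center would require transitivity of $\spaut(\tE)$ on chambers, which is not available when $G_i=\aut(\bldg_i)$ fails to act transitively on $\bldg_i$). Also, the metric comparison is one-sided: a gallery step in $\tbldg$ moves you at most one step in $\tE$, but a single $i$-panel of $\tE$ is an entire $S_i$-residue of $\tbldg$, of unbounded $\tW$-diameter when $W_i$ is infinite; luckily only the true direction is needed to get $\phi(X)\subseteq\ball_n$. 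In fairness, the paper's own (implicit) derivation glosses the same point as your step (i); but judged as a proof, yours leaves the one genuinely nontrivial claim unproved, and that claim is false without additional assumptions on the factors $\bldg_i$.
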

 	
\section{Generalized Graph Product of Groups}\label{sec:GenGraphProd}

As before, let $\graph$ be a graph with vertex set $I$.  Let $\{E_i\}_{i\in I}$ be a collection of rank one buildings and $\{G_i\}_{i\in I}$ be a collection of groups with $G_i$ acting on $E_i$.  We are now going to require that these actions are \emph{transitive}.  Define $E =\prod_I E_i$ and $\tE = \prod_\graph E_i$ as before and let $W$ be the right angled Coxeter group associated to $\graph$ with generating set $I=V(\graph)$.

We will now define the \emph{generalized graph product} of groups acting on sets.  This definition is due to \cite{janswiat}.


Fix some $C=\prod_I C_i \in E$. Let $B_i$ be the stabilizer of $C_i$ in $G_i$ and $B = \prod_I B_i$.  Let $H_i$ be the product of the $B_k$ except with $G_i$ in the $i$-th position rather than $B_i$ and for all $\{i,j\}$ edges in $\graph$ define $H_{i,j}$ to be the product of the $B_k$ except with $G_i$ in the $i$-th position and $G_j$ in the $j$-th position.  We have inclusion maps $f_i:B \hookrightarrow H_i$, and $f_{i,j}: H_i \hookrightarrow H_{i,j}$ when $\{i,j\}$ is an edge in $\graph$.
\begin{definition}
We define the \emph{generalized graph product} of the $G_i$ over $\graph$ to be the direct limit of $B$, the $H_i$, and $H_{i,j}$ with inclusion maps $f_i$ and $f_{i,j}$.
\end{definition}

Call this generalized graph product $\overline{G}$.  Direct limits have the universal property that if any group $H$ has maps $h_i:H_i \to H$ and $h_{i,j}:H_{i,j}\to H$ where the $h_i$ and $h_{i,j}$ respect the inclusion maps $f_i$ and $f_{i,j}$ then there exists a unique homomorphism $h:\overline{G}\to H$ which respects the inclusions as well.

Note that because the $G_i$ act transitively, a different choice of $C$ would give stabilizers conjugate to these $B_i$ so that this generalized graph product is unique up to an inner automorphism of $\prod_I G_i$.

So far we have not assumed the $G_i$ act effectively on the $E_i$. Let $N_i$ be the kernel of the action of $G_i$ on $E_i$ and $N=\prod_I N_i$.  Then $N \unlhd  B$.  Further let $\t{G}$ be the group of all lifts of the action of $\prod_I G_i$ on $E$ to $\tE$.  Then $\t{G}$ neccessarily acts effectively on $\tE$.

\begin{lemma}\label{lem:ggplift}
If $\overline{G}$ and $\t{G}$ are as above, then $\t{G} = \overline{G} / N$.
\end{lemma}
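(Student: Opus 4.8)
The plan is to use the universal property of the direct limit to construct a homomorphism $\Psi\colon\overline{G}\to\t{G}$, to check that $N$ lies in its kernel, and then to prove separately that $\Psi$ is surjective and that $\ker\Psi$ is exactly $N$; the lemma follows. It is convenient to first record that $N=\prod_I N_i$ is normal in $B$, in each $H_i$, and in each $H_{i,j}$ (since $N_k\unlhd G_k$ and $N_k\le B_k$), so that $N\unlhd\overline{G}$ and $\overline{G}/N$ is the direct limit of the quotient diagram, i.e.\ the generalized graph product of the effective quotients $G_i/N_i$. Since $\t{G}$ depends only on the image of $\prod_I G_i$ in $\aut(E)$, which is $\prod_I(G_i/N_i)$, one may as well assume the actions are effective and $N=1$, reducing the statement to $\t{G}=\overline{G}$; I will carry the general $N$ along but this is the essential case.

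To build $\Psi$ I must produce compatible homomorphisms $h_i\colon H_i\to\t{G}$ and $h_{i,j}\colon H_{i,j}\to\t{G}$ respecting the inclusions $f_i$ and $f_{i,j}$. Fix the base chamber $C_0\in\t{E}$ with $p(C_0)=C$. Each $g\in H_i\subseteq\prod_I G_i$ acts on $E$ preserving the $S_i$-residue through $C$ and fixing every other coordinate at $C_k$; I take $h_i(g)$ to be the unique lift of this action to $\t{E}$ that preserves the residue $\panel_i(C_0)$, equivalently the lift sending $C_0$ to the unique chamber of $\panel_i(C_0)$ lying over $g\cdot C$ (using that $p_i\colon\panel_i(C_0)\to E_i$ is an isomorphism by Remark~\ref{bldgEadj}). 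Uniqueness of lifts that agree at $C_0$ makes $h_i$ a homomorphism, and defining $h_{i,j}$ by the same recipe on the larger residue $\panel_{\{i,j\}}(C_0)\cong E_i\times E_j$ yields the compatibility $h_{i,j}\circ f_{i,j}=h_i$, while all $h_i$ agree on $B$ (for $b\in B$ fixes $C$, so $h_i(b)$ is the unique lift fixing $C_0$, independent of $i$). Since $n\in N\subseteq B$ acts trivially on $E$, its unique $C_0$-fixing lift is the identity, so $N\subseteq\ker\Psi$ and $\Psi$ descends to $\overline{\Psi}\colon\overline{G}/N\to\t{G}$.

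For surjectivity, note first that $\t{G}$ acts transitively on $\t{E}$: the group $\prod_I(G_i/N_i)$ is transitive on $E$ (each $G_i$ is transitive on $E_i$) and the deck group $\pi$ is transitive on the fibres of $p$. The image $\Psi(\overline{G})$ contains $h_i(G_i)$, which is transitive on $\panel_i(C_0)$; conjugating by elements already in the image shows $\Psi(\overline{G})$ is transitive on every $S_j$-residue it meets, and hence, by connectivity of $\t{E}$ through galleries, transitive on all of $\t{E}$. Because $\Stab_{\t{G}}(C_0)$ is the group of $C_0$-fixing lifts, which is exactly $\Psi(B)$, the orbit--stabilizer identity $\t{G}=\Psi(\overline{G})\cdot\Stab_{\t{G}}(C_0)$ with the stabilizer already inside the image gives $\Psi(\overline{G})=\t{G}$.

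The main obstacle is injectivity, i.e.\ showing $\ker\Psi=N$ (equivalently, in the effective case, that $\Psi$ is faithful). If $\Psi(x)=\mathrm{id}$ then $x$ acts trivially on $E$ through the induced map $\rho\colon\overline{G}\to\prod_I G_i$, so after multiplying by an element of $N$ one reduces to $x\in\ker\rho$, and it suffices to prove $\Psi$ is injective there. Here I would invoke the non-positive curvature established earlier: the standard realization of $\t{E}$ is a CAT(0) cube complex, and $\overline{G}$ is precisely the fundamental group of the developable complex of groups with local groups $B$, $H_i$, $H_{i,j}$ whose development is $\t{E}$. The local maps $f_i,f_{i,j}$ are injective, so by the Bass--Serre/Haefliger theory of complexes of groups acting on CAT(0) spaces the fundamental group acts faithfully on its development; concretely, a nontrivial reduced word in $\overline{G}$ drives $C_0$ along a non-backtracking gallery in $\t{E}$ and so cannot act as the identity. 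This faithfulness is exactly $\ker\Psi=1$ in the effective case, and unwinding the reduction yields $\ker\Psi=N$ in general, whence $\t{G}\cong\overline{G}/N$.
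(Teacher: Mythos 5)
Your construction of $\Psi$ via the universal property, the verification that $N\subseteq\ker\Psi$, and your surjectivity argument are all sound, and up to that point you follow the same route as the paper: the paper defines the same compatible lifts $\t{B}$, $\t{H_i}$, $\t{H_{i,j}}$, obtains $h:\overline{G}\to\t{G}$ from the universal property, and disposes of surjectivity in one line (``the $\t{H_i}$ generate all of $\t{G}$''); your gallery-induction plus orbit--stabilizer argument, using $\Stab_{\t{G}}(C_0)=\Psi(B)$, is a legitimate and more detailed justification of that one line.

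The genuine gap is in your injectivity step. The principle you invoke --- that the fundamental group of a developable complex of groups with injective local maps acts \emph{faithfully} on its development --- is false. What the theory gives is that the local groups inject and appear as cell stabilizers; the kernel of the action on the development is the largest normal subgroup contained in all conjugates of the local groups, and this ``core'' can be nontrivial even when every structure map is injective: the loop of groups with vertex group $A$, edge group $A$, and identity inclusions has fundamental group $A\times\mathbb{Z}$, acting on its Bass--Serre tree (a line) with kernel $A$. In the present situation the core of the image of $B$ in $\overline{G}$ really is the image of $N$, but seeing this requires a computation that uses transitivity of the $G_i$-actions (for instance $\bigcap_{g\in G_i}gB_ig^{-1}=N_i$, which fails without transitivity), and you never perform it. Worse, your identification of $\overline{G}$ as the fundamental group of a complex of groups \emph{whose development is $\tE$}, and your fallback claim that a nontrivial reduced word in $\overline{G}$ drives $C_0$ along a non-backtracking gallery, are both essentially restatements of the lemma being proved: they presuppose a developability/normal-form theorem for $\overline{G}$ acting on $\tE$, which is exactly what is at stake, so the argument is circular. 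The paper sidesteps all of this by exploiting that $\t{G}$ is \emph{by definition} a group of transformations of $\tE$, so that $\ker h$ equals the kernel of the induced action of $\overline{G}$ on $\tE$; that kernel is then identified with $N$ through the lifted actions of the $H_i$, whose kernels on $E$ are exactly $N$, with no appeal to development theory at all.
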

\begin{proof}
Fix $\t{C_0}$ a chamber in the lift of $C_0$ and let $\t{B}$ be the lift of $B$ which stabilizes $\t{C_0}$.  Let $\t{H_i}$ be the lift of $H_i$ stabilizing the $i$-panel containing $\t{C_0}$ and for each edge $\{i,j\}$ in the graph $\graph$ let $\t{H_{i,j}}$ be the lift of $H_{i,j}$ stabilizing the $\{i,j\}$-residue containing $\t{C_0}$.  This induces maps from $B$, $H_i$ and $H_{i,j}$ to $\t{G}$ the group of lifts.  It follows from the construction that these maps respect inclusion.  This gives us a unique map $h:\overline{G}\to\t{G}$ that agrees with the embeddings.

The $\t{H_i}$ generate all of $\t{G}$ so that $h$ must be onto.  The map $h$ induces an action of $\overline{G}$ on $\tE$.  Because $\t{G}$ is defined as the group of lifts, $\t{G}$ must act effectively so that the kernel of $h$ is the kernel of the action.  We have an action of each $H_i$ on $\t{E}$ by lifts of the action of $H_i$ on $E$.  The kernel of the action of $H_i$ on $E$ is $N$ the product of the $N_i$, thus $N < \overline{G}$ is the kernel of $h$.  This gives us that $\t{G}=\overline{G}/N$ as desired.
\end{proof}

It immediately follows that $\overline{G}/N \subset \spaut{(\tE)}$.  Note that when the $G_i$ act effectively, $N$ is trivial so these groups are the same.  In most cases of interest these notions coincide.  

\section{Residual Finiteness}

When the $G_i$ are finite we have that $\t{G}$ is a lattice in $\spaut(\tE)$.  We refer to $\t{G}$ as the \emph{standard lattice} of $\spaut(\tE)$.  Margulis proved that a lattice $\Gamma$ of a group $G$ is arithmetic in some group $G$ if and only if the commensurator of $\Gamma$ in $G$ is dense in $G$ \cite{Margulis91}.  Liu proved that the commensurator of the standard uniform lattice of a tree in the full automorphism group  is dense in that group \cite{Liu94}.  Haglund generalized this to the following statement in \cite{haglund}.

\begin{theorem}
If we let $E_i=G_i$ then the commensurator of $\t{G}$ in $\aut(\tE)$ is dense in that group.
\label{comm}
\end{theorem}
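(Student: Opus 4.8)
\emph{Setup and reduction.} Since each $G_i$ is finite, $\tE$ is locally finite and $G := \aut(\tE)$ is a totally disconnected, locally compact group in the topology of pointwise convergence on the chamber set; the pointwise stabilizers $U_n := \{h \in G : h|_{\ball_n} = \mathrm{id}\}$ are compact open and form a neighborhood basis of the identity, in agreement with the inverse-limit description of $\Stab_G(C_0) = U_0$ in Theorem \ref{thm:ext}. With $E_i = G_i$ the action is the left-regular one, so by Lemma \ref{lem:ggplift} (here $N$ is trivial) $\tG$ is the ordinary graph product of the finite groups $G_i$ and acts simply transitively on the chambers of $\tE$; in particular $G = \tG\, U_0$ with $\tG \cap U_0 = 1$. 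Because $\tG \subseteq \mathrm{Comm}_G(\tG)$ and left translation by an element of $\tG$ is a homeomorphism, density of $\mathrm{Comm}_G(\tG)$ in $G$ reduces to density of $\mathrm{Comm}_G(\tG) \cap U_0$ in $U_0$. Thus it suffices to prove: for every $g \in U_0$ and every $n$ there is a commensurating element $h$ with $h|_{\ball_n} = g|_{\ball_n}$.

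\emph{A commensuration criterion.} The plan is to produce such an $h$ that normalizes a finite-index subgroup of $\tG$. The point is the elementary fact that if $h$ normalizes a finite-index subgroup $\Gamma_0 \le \tG$, then $h \in \mathrm{Comm}_G(\tG)$: indeed $\tG \cap h\tG h^{-1} \supseteq h\Gamma_0 h^{-1} = \Gamma_0$, which has finite index in $\tG$, and since $\tG$ and $h\tG h^{-1}$ are both uniform (cocompact) lattices, a common finite-covolume subgroup has finite index in each. So the task becomes: realize the prescribed ball automorphism $g|_{\ball_n}$ by an automorphism of $\tE$ that is $\Gamma_0$-equivariant for some finite-index $\Gamma_0$, i.e. descends to the finite quotient $\Gamma_0 \backslash \tE$.

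\emph{Construction.} Graph products of finite groups are residually finite (a classical fact), so $\tG$ contains a finite-index subgroup $\Gamma_0$ with $\length(\delta(C_0,\gamma C_0)) > 2n$ for every $\gamma \in \Gamma_0 \setminus \{1\}$; equivalently the ball $\ball_{2n}$ embeds into the finite complex $Q := \Gamma_0 \backslash \tE$, and a fundamental domain $F$ for $\Gamma_0$ may be chosen to contain $\ball_{2n}$. I would then define $h$ on $F$ and extend by the rule $h(\gamma_0 x) = \gamma_0\, h(x)$ for $\gamma_0 \in \Gamma_0$. On $\ball_n$ set $h = g$; in the shell $F \setminus \ball_n$ use the extension freedom of Lemma \ref{lem:ext} and Corollary \ref{cor:extend}, together with the homogeneity of the regular right-angled building (Remark \ref{RABcard}), to interpolate between $g$ near $C_0$ and the identity on the boundary-identification pattern of $F$, so that the equivariant extension is consistent. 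The injectivity-radius bound guarantees that the shells around distinct $\Gamma_0$-translates of $C_0$ do not collide, so the prescription yields a genuine type-preserving automorphism of $\tE$ that normalizes (indeed centralizes) $\Gamma_0$ and restricts to $g$ on $\ball_n$. By the criterion of the previous paragraph, $h \in \mathrm{Comm}_G(\tG)$, which completes the reduction and proves the theorem.

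\emph{Main obstacle.} The delicate step is the shell interpolation: I must "unwind" $g|_{\ball_n}$ to the boundary gluing pattern of $F$, rather than merely extend it to some automorphism of $\tE$ (which Corollary \ref{cor:extend} already supplies). The freedom to do so lives on the $\mathcal{P}_1$-type chambers, where Lemma \ref{lem:ext} allows an arbitrary local permutation, while on the $\mathcal{P}_2$-type chambers the extension is forced; ensuring that the forced values are compatible with a finite $\Gamma_0$-periodic pattern is exactly where the projection structure of Proposition \ref{bldg1} and the residue identifications of Remark \ref{bldgEadj} must be used, and is the content of Haglund's argument in \cite{haglund} (in cube-complex language, the canonical completion of a local isometry to a finite cover). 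I also note that invoking residual finiteness of ordinary graph products here is not circular: the paper's own residual-finiteness result concerns the \emph{generalized} graph product arising from non-regular actions, whereas for $E_i = G_i$ the group $\tG$ is a classical graph product of finite groups.
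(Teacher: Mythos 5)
First, a point of orientation: the paper contains no proof of Theorem \ref{comm} at all. It is imported verbatim as Haglund's theorem from \cite{haglund} (generalizing Liu's result for trees \cite{Liu94}) and is only used downstream, so your attempt can only be judged on whether it stands on its own. Its soft skeleton does stand: $\aut(\tE)$ is totally disconnected and locally compact, with the pointwise stabilizers $U_n$ of the balls $\ball_n$ a neighborhood basis of the identity; with $E_i=G_i$ one has $B=1$ and $N=1$ in Lemma \ref{lem:ggplift}, so $\tG$ is the ordinary graph product of the finite groups $G_i$ acting simply transitively on chambers, whence $\aut(\tE)=\tG\, U_0$ and density does reduce to approximating elements of $U_0$ by commensurating elements. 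Your normalization criterion is also correct, and in fact follows more simply than by your covolume argument: $h\Gamma_0h^{-1}=\Gamma_0$ gives $[\,h\tG h^{-1}:\Gamma_0\,]=[\,\tG:\Gamma_0\,]<\infty$ directly by conjugation. Finally, invoking residual finiteness of graph products of finite groups (a classical fact, independent of the paper's Theorem \ref{thm:resfinite}) to obtain $\Gamma_0$ of large injectivity radius is legitimate and, as you say, not circular.

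The genuine gap is in your ``Construction'' paragraph, and it is not a small one: that paragraph is where the entire content of the theorem sits, and what you write there --- ``interpolate between $g$ near $C_0$ and the identity on the boundary-identification pattern of $F$, so that the equivariant extension is consistent'' --- is a description of what must be achieved, not an argument. The difficulty is precisely that extensions of a ball automorphism are \emph{forced} on $\mathcal{P}_2$-type chambers (the paper's uniqueness lemma), so one must prove that the values forced by $g|_{\ball_n}$, propagated around the finite quotient $\Gamma_0\backslash\tE$, close up coherently; equivalently, that $g|_{\ball_n}$ extends to an automorphism of $\tE$ that commutes with $\Gamma_0$ and hence descends to the finite quotient. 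Nothing in the paper supplies this: Theorem \ref{thm:ext} and Corollary \ref{cor:extend} produce \emph{some} extension to all of $\tE$, built chamber by chamber with free choices on $\mathcal{P}_1$-type panels, but with no periodicity or equivariance control whatsoever, and making those free choices $\Gamma_0$-periodically consistent is exactly the combinatorial heart of the matter. You then explicitly outsource this step to ``Haglund's argument in \cite{haglund}'' --- but that is the very source whose theorem you are proving, so your attempt reduces Theorem \ref{comm} to its own citation. As written, it is an accurate outline of the known proof strategy, not a proof.
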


In this paper, Haglund also proved that the standard lattice is residually finite.  If we let $\t{G}^0$ be the group of lifts of the identity, we have that $\t{G}^0$ is finite index in $\t{G}$.  Thus, $\t{G}^0$ is also residually finite.

Let the $E_i$ again be finite rank one buildings, and require only that the $G_i$ act transitively on $E_i$.  We apply Haglund's result to the lifts of the identity, $\t{G}^0$ in $\spaut(\tE)$, and see that the action of $\t{G}^0$ on $\tE$ is residually finite.  Now, if the $G_i$ are all finite, then $\t{G}^0$ is finite index in $\t{G}$.  Thus the action of $\t{G}$ on $\tE$ is residually finite.  Hence we have the following result:

\begin{theorem}\label{thm:resfinite}
Let each $\bldg_i$ be finite, and $G_i=\aut(\bldg_i)$ then the standard lattice in $\tG$ is residually finite.
\end{theorem}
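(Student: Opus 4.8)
The plan is to deduce residual finiteness of the standard lattice $\tG$ from that of its finite-index subgroup $\tG^0$, the latter being supplied by Haglund's theorem as recalled just above. First I would record that the hypotheses of the preceding discussion are met: since each $\bldg_i$ is finite, $G_i=\aut(\bldg_i)$ is a finite group, and because $\aut(\bldg_i)$ acts faithfully on $E_i=\bldg_i$ the kernels $N_i$ are trivial, so by Lemma~\ref{lem:ggplift} we have $\tG=\overline{G}$ acting effectively on $\tE$. Consequently, residual finiteness of the action of $\tG$ on $\tE$ coincides with residual finiteness of $\tG$ as an abstract group, which is what must be established.

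Next I would make the finite-index claim precise. Assigning to each lift the element of $\prod_I G_i$ that it covers defines a homomorphism $\tG\to\prod_I G_i$ whose kernel is exactly $\tG^0$; since $\tG$ is by definition the group of all lifts, this map is surjective, yielding a short exact sequence $1\to\tG^0\to\tG\to\prod_I G_i\to 1$. As each $G_i$ is finite (and $I$ is finite, as assumed throughout), the group $\prod_I G_i$ is finite, so $\tG^0$ is a normal subgroup of finite index in $\tG$.

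Then I would invoke the standard fact that a group possessing a residually finite subgroup of finite index is itself residually finite. After replacing $\tG^0$ by its normal core in $\tG$, one may assume $\tG^0\trianglelefteq\tG$ with $\tG/\tG^0$ finite and $\tG^0$ still residually finite. Given $1\neq g\in\tG$: if $g\notin\tG^0$, the quotient map $\tG\to\tG/\tG^0$ already separates $g$ from the identity; if $g\in\tG^0$, residual finiteness of $\tG^0$ produces a finite-index normal subgroup $U\trianglelefteq\tG^0$ with $g\notin U$, and its normal core $U_{\tG}$ in $\tG$ is finite-index and normal in $\tG$ with $U_{\tG}\subseteq U$, so $g\notin U_{\tG}$. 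In either case $g$ has nontrivial image in a finite quotient of $\tG$, so $\tG$ is residually finite.

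The genuine content is entirely Haglund's residual finiteness of $\tG^0$; the remaining steps are formal. The point demanding the most care is the finite-index claim $[\tG:\tG^0]<\infty$, since this is exactly where finiteness of the $\bldg_i$ enters, through finiteness of each $\aut(\bldg_i)$ and hence of $\prod_I G_i$. Were the $G_i$ infinite, $\tG^0$ would sit inside $\tG$ with infinite index and the bootstrap from $\tG^0$ to $\tG$ would break down; it is precisely the finiteness hypothesis that repairs this and lets residual finiteness pass upward.
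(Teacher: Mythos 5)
Your proof is correct and takes essentially the same route as the paper: both arguments bootstrap from Haglund's residual finiteness of the group $\tG^0$ of lifts of the identity, using finiteness of the $G_i$ (each $\aut(\bldg_i)$ being finite since $\bldg_i$ is finite) to conclude that $\tG^0$ has finite index in $\tG$, and then pass residual finiteness up the finite-index inclusion. You merely make explicit two steps the paper leaves implicit --- the surjection $\tG\to\prod_I G_i$ with kernel $\tG^0$, and the normal-core argument showing residual finiteness ascends to finite-index overgroups --- which is a faithful filling-in rather than a different proof.
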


\bibliography{biblio}
\bibliographystyle{plain}

\end{document}